\documentclass[12pt]{article}
\usepackage{amsmath,amscd}
\usepackage{amssymb,latexsym,amsthm}
\usepackage{color}

\usepackage{amssymb}
\usepackage{color}
\usepackage{amsmath,amsthm,amscd}
\usepackage[latin1]{inputenc}
\usepackage{lscape}
\usepackage{fancyhdr}
\usepackage{amsfonts}
\usepackage{pb-diagram}
\usepackage{hyperref}

\numberwithin{equation}{section}
\newtheorem{theorem}{Theorem}[section]

\newtheorem{proposition}[theorem]{Proposition}

\newtheorem{corollary}[theorem]{Corollary}

\theoremstyle{definition}

\newtheorem{definition}[theorem]{Definition}

\newtheorem{remark}[theorem]{Remark}

\hoffset-0.3in
\voffset-1.3cm \setlength{\oddsidemargin}{9mm}
\setlength{\textheight}{21.3cm}\setlength{\textwidth}{15.8cm}

\title{\textbf{Homological properties of some quantum Heisenberg algebras}}
\author{Samuel A. Lopes
\footnote{CMUP, Departamento de Matem\'atica, Faculdade de Ci\^encias, Universidade do Porto, Rua do Campo Alegre s/n, 4169--007 Porto, Portugal. https://orcid.org/0000-0002-1292-0977.}\\H\'ector Su\'arez  \footnote{Grupo de \'Algebra y An\'alisis, Universidad Pedag\'ogica y Tecnol\'ogica
de Colombia, Tunja, Colombia. https://orcid.org/0000-0003-4618-0599.}\\ Y\'esica Su\'arez
\footnote{Grupo de \'Algebra y An\'alisis, Universidad Pedag\'ogica
y Tecnol\'ogica de Colombia, Tunja, Colombia. \\https://orcid.org/0000-0002-7373-4852
.}}
\date{}
\begin{document}
\maketitle
\begin{abstract}
This paper presents a survey of several classes of Heisenberg-type algebras, including classical, quantum, and generalized variants, as well as their deformations and extensions. We explore the structural and homological properties of these algebras, focusing on their realization as (graded) skew PBW extensions and (graded) iterated Ore extensions. Special attention is given to the Koszul, Artin--Schelter regular, and (skew) Calabi--Yau properties of these algebras. We also examine the relationships between different families, such as quantum Heisenberg algebras, generalized Heisenberg algebras, and quantum generalized Heisenberg algebras, highlighting conditions under which these algebras exhibit desirable homological behaviors. Our results unify and extend several known constructions, offering a comprehensive perspective on the homological landscape of Heisenberg-type algebras.

\bigskip

\noindent \textit{Key words and phrases.} Quantum generalized
Heisenberg algebras, graded skew PBW extensions, Calabi--Yau algebras, Artin--Schelter
regular,  Koszul algebras.

\bigskip

\noindent 2020 \textit{Mathematics Subject Classification.}
 16E65, 16S36, 16S37, 16S80, 16W50, 17B81.
\end{abstract}

\section{Introduction}

Heisenberg-type algebras, originating from the (universal enveloping algebra of the) three-dimensional Heisenberg Lie algebra, have inspired a rich variety of generalizations and deformations. These include quantum Heisenberg algebras (\cite{Kirkman1993}), two-parameter deformations (\cite{Hayashi1990,Gaddis2016}), generalized Heisenberg algebras (\cite{Curado2001, Lu2015}), and more recently, quantum generalized Heisenberg algebras (\cite{Lopes2022Q, Lopes2022}). These algebras not only generalize classical structures but also serve as fertile ground for exploring interesting homological properties in noncommutative algebra.

In this paper, we present a survey of several such algebras, focusing on their algebraic and homological features. We examine how these algebras can be realized as (graded) skew PBW extensions and (graded) iterated Ore extensions, which provides a unifying framework for understanding their structure. We investigate conditions under which these algebras are Koszul, Artin--Schelter regular, and (skew) Calabi--Yau, and we explore how these properties are preserved under various constructions, such as Ore extensions and PBW deformations.

Our approach highlights the interplay between structural ring-theoretic characteristics and homological properties, offering new insights into the behavior of Heisenberg-type algebras. In particular, we provide an overview of skew PBW extensions  and relate this construction to our main examples, including quantum generalized Heisenberg algebras. Along the way, we clarify the relationships between different families of Heisenberg algebras and provide new results on their homological properties.

\section{Preliminaries on Artin--Schelter regular and Calabi--Yau algebras, and skew PBW extensions}

Throughout this paper, $K$ is an arbitrary field, $K^*=K\setminus \{0\}$, every ring (algebra) is associative with identity and algebras are $K$-algebras,  $K\langle x_1, \dots , x_{n}\rangle$ is the free associative algebra with $n$ generators, unless otherwise
stated. We will denote by $\mathbb{Z}$, $\mathbb{N}$, $\mathbb{Z}^+$ the sets of all integers, nonnegative integers, and positive integers, respectively.

A graded algebra $B=\oplus_{p\geq 0}B_p$ is called \emph{connected}
if $B_0=K$. In this case, let $K = B/B_{\geq 1}$ be the trivial module. Then $B$ is further said to
be \emph{Artin--Schelter regular} of dimension $d$ if $B$ has finite
global dimension $d$ and
\[Ext^i_{B} (K, B) \cong \left\{
                                         \begin{array}{ll}
                                           K(l), & i= d; \\
                                           0, & i\neq d,
                                           \end{array}
                                       \right.\]
for some $l\in \mathbb{Z}$. Commutative Artin--Schelter regular algebras are polynomial rings. Artin--Schelter regular algebras of global dimension two
and global dimension three were classified in \cite{ArtinSchelter}, but there
are still many open questions about these algebras and the
classification of Artin--Schelter regular algebras of global
dimension greater than three is still open. Different authors have focused on
studying Artin--Schelter regular algebras, especially those of global
dimension four and five (see e.g.\
\cite{Wang,Zhang,Zhang2,Zhun}). For example, in \cite{Zhang2}, Zhang and Zhang constructed $26$ families of Artin--Schelter regular algebras of global dimension four using a special class of
algebras, called double Ore extensions, which they had introduced in \cite{Zhang}. They proved that a connected graded double Ore extension of an Artin--Schelter regular algebra is again Artin--Schelter regular.

The \emph{enveloping algebra} of an algebra  $B$ is the tensor
product $B^e =B\otimes B^{op}$, where $B^{op}$ is the opposite
algebra of $B$. Calabi--Yau algebras were defined
by Ginzburg in \cite{Ginzburg1} and the skew Calabi--Yau algebras
were defined as a generalization of those. A graded algebra $B$ is called \emph{graded skew Calabi--Yau} of dimension $d$ if, as a $B^e$-module, $B$ has a projective resolution that has finite length and such that each term in the projective resolution is finitely generated, and there exists an algebra automorphism  $\nu$
of $B$ such that \[Ext^i_{B^e} (B,B^e) \cong \left\{
                                         \begin{array}{ll}
                                           0, & i\neq d; \\
                                           B^{\nu}(l), & i= d;
                                         \end{array}
                                       \right.\]
 as $B^e$-modules, for some integer $l$.  If $\nu$ is the identity, then $B$ is said to be \emph{Calabi--Yau}. Ungraded Calabi--Yau algebras are defined similarly but
without the degree shift. The automorphism $\nu$ is called the \emph{Nakayama} automorphism of $B$, and is unique up to inner automorphisms of $B$. Thus a skew Calabi--Yau algebra is Calabi--Yau  if and only if its Nakayama automorphism is inner. Reyes, Rogalski and
 Zhang in \cite{ReyesRogalskiZhang} proved that, for connected algebras, the skew
Calabi--Yau property is equivalent  to the Artin--Schelter regular
property. An example of a skew
Calabi--Yau algebra which is not Calabi--Yau is the Jordan plane.

Skew PBW extensions, defined below, are very useful constructions as they often preserve most of the properties referenced above and a few others. They were introduced in \cite{GallegoLezama} and have been profusely studied in the recent years (see for example \cite{ReyesSuarez2019-2,ReyesSuarez2019Radicals,SuarezChaconReyes2021,SuarezReyes2023,SuarezHigueraReyes2024}). 

While many of the algebras considered herein can be realized as Ore extensions, the framework of skew PBW extensions provides a different setting that encompasses some algebras which are not iterated Ore extensions; moreover, it is especially adequate for the study of homological properties of PBW deformations, such as Koszul and (skew) Calabi--Yau structures (see also the comments above Proposition~\ref{prop.twoHisgrskpbw}).

\begin{definition}[\cite{GallegoLezama}, Definition 1]\label{def.skewpbwextensions}
Let $R$ and $A$ be rings. We say that $A$ is a \textit{skew PBW
extension over} $R$ (the ring of coefficients), denoted
$A=\sigma(R)\langle x_1,\dots,x_n\rangle$, if the following
conditions hold:
\begin{enumerate}
\item[\rm (i)]$R$ is a subring of $A$ sharing the same identity element.
\item[\rm (ii)] there exist finitely many elements $x_1,\dots ,x_n\in A$ such that $A$ is a left free $R$-module, with basis the
set of standard monomials
\begin{center}
${\rm Mon}(A):= \{x^{\alpha}:=x_1^{\alpha_1}\cdots
x_n^{\alpha_n}\mid \alpha=(\alpha_1,\dots ,\alpha_n)\in
\mathbb{N}^n\}$.
\end{center}
Moreover, $x^0_1\cdots x^0_n := 1 \in {\rm Mon}(A)$.
\item[\rm (iii)]For each $1\leq i\leq n$ and any $r\in R\ \backslash\ \{0\}$, there exists an
    element $c_{i,r}\in R\ \backslash\ \{0\}$ such that
\begin{equation}\label{eq.coef}
x_ir-c_{i,r}x_i\in R.
\end{equation}
\item[\rm (iv)]For $1\leq i,j\leq n$, there exists $d_{i,j}\in R\ \backslash\ \{0\}$ such that
\begin{equation}\label{eq.variab}
x_jx_i-d_{i,j}x_ix_j\in R+Rx_1+\cdots +Rx_n,
\end{equation}
i.e., there exist elements $r_0^{(i,j)}, r_1^{(i,j)}, \dotsc, r_n^{(i,j)} \in R$ with $x_jx_i - d_{i,j}x_ix_j = r_0^{(i,j)} + \sum_{k=1}^{n} r_k^{(i,j)}x_k$.
\end{enumerate}
\end{definition}

For $X = x^{\alpha} = x_1^{\alpha_1}\cdots x_n^{\alpha_n} \in \text{Mon}(A)$, set $\text{deg}(X) = |\alpha| := \alpha_1 + \cdots + \alpha_n$. From the definition, it follows that every non-zero element $f \in A$ can be uniquely expressed as $f = a_0 + a_1X_1 + \cdots + a_mX_m$, with $a_i \in R$ and $X_i \in \text{Mon}(A)$, for $0 \leq i \leq m$ \cite[Remark 2]{GallegoLezama}.

\begin{proposition}[\cite{GallegoLezama}, Proposition 3] \label{prop.endoderiv}
If $A=\sigma(R)\langle x_1,\dots,x_n\rangle$ is a skew PBW extension, then there exist an injective endomorphism $\sigma_i:R\rightarrow R$ and a $\sigma_i$-derivation $\delta_i:R\rightarrow R$ such that $x_ir=\sigma_i(r)x_i+\delta_i(r)$, for each $1\leq i\leq n$, where $r\in R$.
\end{proposition}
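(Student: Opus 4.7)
The plan is to extract $\sigma_i$ and $\delta_i$ directly from condition (iii) of Definition \ref{def.skewpbwextensions}, and then verify the algebraic identities using the uniqueness of expansions that comes from the free module structure in (ii).

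First, I would fix $i$ and, for each $r \in R \setminus \{0\}$, invoke (iii) to write $x_i r = c_{i,r} x_i + d_{i,r}$ for some $c_{i,r} \in R \setminus \{0\}$ and some $d_{i,r} \in R$. Define $\sigma_i(r) := c_{i,r}$ and $\delta_i(r) := d_{i,r}$, with the convention $\sigma_i(0) = 0 = \delta_i(0)$ (consistent with $x_i \cdot 0 = 0$). The crucial point is that this decomposition is unique: since $1 = x_1^0 \cdots x_n^0$ and $x_i = x_1^0\cdots x_i^1 \cdots x_n^0$ both lie in $\operatorname{Mon}(A)$, and $A$ is a left free $R$-module on $\operatorname{Mon}(A)$ by (ii), the coefficients of $1$ and $x_i$ in $x_i r$ are uniquely determined. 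Thus $\sigma_i$ and $\delta_i$ are well-defined maps $R \to R$, and the defining equation $x_i r = \sigma_i(r) x_i + \delta_i(r)$ holds for all $r \in R$.

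Next I would check additivity: expanding $x_i(r+s)$ two ways as $x_i r + x_i s = \bigl(\sigma_i(r)+\sigma_i(s)\bigr)x_i + \bigl(\delta_i(r)+\delta_i(s)\bigr)$ and as $\sigma_i(r+s)x_i + \delta_i(r+s)$, the uniqueness just observed forces $\sigma_i$ and $\delta_i$ to be additive. For multiplicativity and the twisted Leibniz rule, I would compute $x_i(rs)$ via associativity:
\[
x_i(rs) = (x_i r)s = \sigma_i(r)(x_i s) + \delta_i(r)s = \sigma_i(r)\sigma_i(s)x_i + \bigl(\sigma_i(r)\delta_i(s) + \delta_i(r)s\bigr),
\]
and then equate with $\sigma_i(rs)x_i + \delta_i(rs)$. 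Uniqueness yields $\sigma_i(rs) = \sigma_i(r)\sigma_i(s)$ and $\delta_i(rs) = \sigma_i(r)\delta_i(s) + \delta_i(r)s$, which is exactly the $\sigma_i$-derivation property. Applying the defining equation to $r=1$ gives $\sigma_i(1)=1$ (so $\sigma_i$ is a ring endomorphism) and $\delta_i(1) = 0$.

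Finally, injectivity of $\sigma_i$ is immediate from condition (iii): for $r \neq 0$ one has $c_{i,r} = \sigma_i(r) \in R \setminus \{0\}$, so $\ker \sigma_i = 0$. There is no real obstacle here; the argument is pure bookkeeping. The only subtlety worth stating carefully is that all the identities rest on the uniqueness of expansions in the free $R$-basis $\operatorname{Mon}(A)$, so I would make sure to invoke (ii) explicitly at each step where coefficients are being compared.
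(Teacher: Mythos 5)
Your argument is correct and complete: extracting $\sigma_i$ and $\delta_i$ from condition (iii), using freeness of $A$ over $R$ on $\operatorname{Mon}(A)$ for uniqueness of the coefficients of $1$ and $x_i$, and then reading off additivity, multiplicativity, the twisted Leibniz rule and injectivity from $x_i(r+s)$, $x_i(rs)$, $x_i\cdot 1$ and the nonvanishing of $c_{i,r}$ is exactly the standard proof. The paper itself states this result without proof, citing \cite{GallegoLezama}, and your argument reproduces the proof given there; there is nothing to add.
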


Let $A=\sigma(R)\langle x_1,\dots,x_n\rangle$ be a skew PBW extension over a ring $R$ and $\sigma_i$, $\delta_i$ as in Proposition \ref{prop.endoderiv}. $A$ is said to be {\it bijective} if  $\sigma_i$ is bijective for each $1 \leq i \leq n$ and $d_{i,j}$ is invertible for any $1 \leq i <j \leq n$.  If $\delta_i$ is zero for every $i$, then $A$ is called a skew PBW extension of \textit{endomorphism type}.

\begin{proposition}[\cite{Suarez2017}, Proposition 2.7]\label{prop.grad A}
Let $R=\oplus_{m\geq 0}R_m$ be an $\mathbb{N}$-graded algebra and let $A=\sigma(R)\langle
x_1,\dots, x_n\rangle$ be a bijective skew PBW extension of $R$ satisfying the fo\-llo\-wing two
conditions:
\begin{enumerate}
\item[\rm (i)] $\sigma_i$ is a graded ring homomorphism and $\delta_i : R(-1) \to R$ is a graded $\sigma_i$-derivation for all $1\leq i  \leq n$,
where $\sigma_i$ and $\delta_i$ are as in Proposition \ref{prop.endoderiv} and $R(-1)$ denotes a graded shift.
\item[\rm (ii)]  $x_jx_i-d_{i,j}x_ix_j\in R_2+R_1x_1 +\cdots + R_1x_n$, as in (\ref{eq.variab}) and $d_{i,j}\in R_0$.
\end{enumerate}
For $p\geq 0$, let $A_p$ be the $K$-space generated by the set
\[\Bigl\{r_tx^{\alpha} \mid t+|\alpha|= p,\  r_t\in R_t \text{  and } x^{\alpha}\in {\rm
Mon}(A)\Bigr\}.
\]
Then $A$ is an $\mathbb{N}$-graded algebra with graduation
\begin{equation}\label{eq.grad alg skew}
A=\oplus_{p\geq 0} A_p.
\end{equation}
\end{proposition}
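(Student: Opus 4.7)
The plan is to verify first that the $K$-vector space decomposition $A = \oplus_{p \geq 0} A_p$ is well defined, and then that it respects multiplication. Since every $f \in A$ has a unique expression $f = \sum_\alpha r_\alpha x^\alpha$ with $r_\alpha \in R$ and $x^\alpha \in {\rm Mon}(A)$, decomposing each coefficient $r_\alpha = \sum_t (r_\alpha)_t$ into its graded components and grouping by $p = t + |\alpha|$ gives $A = \sum_p A_p$. Directness of this sum follows from the freeness of $A$ as a left $R$-module with basis ${\rm Mon}(A)$ together with the directness of $R = \oplus_t R_t$: any putative relation $\sum_p a_p = 0$ expands into an $R$-linear relation on ${\rm Mon}(A)$, forcing each graded piece of each coefficient to vanish.

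The substantive task is to prove $A_p \cdot A_q \subseteq A_{p+q}$, which reduces to checking $(r_s x^\alpha)(r'_t x^\beta) \in A_{s + |\alpha| + t + |\beta|}$ for $r_s \in R_s$, $r'_t \in R_t$, and $x^\alpha, x^\beta \in {\rm Mon}(A)$. The computation rests on two primitive observations derived from conditions (i) and (ii). First, Proposition \ref{prop.endoderiv} gives $x_i r'_t = \sigma_i(r'_t) x_i + \delta_i(r'_t)$, where by (i), $\sigma_i(r'_t) \in R_t$ and $\delta_i(r'_t) \in R_{t+1}$; hence $x_i r'_t \in R_t x_i + R_{t+1} \subseteq A_{t+1}$. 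Second, condition (ii) immediately yields $x_j x_i \in R_0 x_i x_j + R_2 + \sum_k R_1 x_k \subseteq A_2$.

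With these in hand, I would proceed by induction. Iterating the first fact shows that moving a single $x_i$ past an element of $A_m$ keeps the result in $A_{m+1}$, modulo normalization to the standard basis. To handle that normalization, one uses the second fact to swap out-of-order adjacent pairs $x_j x_i$ (with $j > i$), maintaining the invariant that all terms lie in the correct graded piece. The full argument is a double induction: one outer induction on $|\alpha| + |\beta|$, and one inner induction on the number of inversions relative to the standard order, showing that every rewriting step preserves membership in $A_{s + |\alpha| + t + |\beta|}$. Multiplying on the left by $r_s \in R_s$ then completes the desired inclusion.

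The main obstacle is the bookkeeping in this inductive normalization. Each swap via condition (ii) produces lower-length monomials with $R_1$ or $R_2$ coefficients, and these must subsequently be moved leftward past other $x_\ell$'s using the first fact, potentially triggering further rewritings via (ii). What makes everything balance is precisely the degree-matching built into the hypotheses: the graded $\sigma_i$ and the graded $\sigma_i$-derivation $\delta_i$ trade a unit of $x$-length for a unit of $R$-degree, and the relation (\ref{eq.variab}) under (ii) is homogeneous of total degree $2$. Together these ensure that $s + |\alpha| + t + |\beta|$ is preserved throughout every rewriting step, so the putative grading is in fact a ring grading and $A = \oplus_{p \geq 0} A_p$ is an $\mathbb{N}$-graded $K$-algebra.
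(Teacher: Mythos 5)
The paper does not actually prove this proposition: it is quoted verbatim from \cite{Suarez2017} (Proposition 2.7) and used as a black box, so there is no in-paper argument to compare yours against. Judged on its own, your proposal is correct and is the natural direct verification. The vector-space decomposition $A=\oplus_p A_p$ does follow exactly as you say from $A=\oplus_\alpha Rx^\alpha=\oplus_\alpha\oplus_t R_t x^\alpha$, and your two primitive degree computations are the right ones: condition (i) gives $x_i r_t=\sigma_i(r_t)x_i+\delta_i(r_t)\in R_tx_i+R_{t+1}$ (the shift $R(-1)\to R$ is precisely the statement that $\delta_i$ raises degree by one), and condition (ii) gives $x_jx_i\in R_0x_ix_j+R_2+\sum_k R_1x_k$, both homogeneous of total degree matching the left-hand side. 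Your double induction (on total monomial length, then on the number of inversions) with the observation that every rewriting step trades a unit of $x$-length for a unit of $R$-degree is exactly what makes the normalization terminate inside the correct graded piece; the only thing left implicit is the routine verification that coefficients produced in the middle of a word can be pushed left past the remaining variables without disturbing the total degree, which your first primitive fact already covers. I see no gap.
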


In \cite{Suarez2017}, the second-named author defined the graded skew PBW extensions for an algebra $R$ as a generalization of graded iterated Ore extensions. 

\begin{definition}[\cite{Suarez2017}, Definition 2.6]\label{def. graded skew PBW ext} Let  $A=\sigma(R)\langle x_1,\dots, x_n\rangle$ be a bijective
skew PBW extension of an $\mathbb{N}$-graded algebra
$R=\oplus_{m\geq 0}R_m$. We say that $A$ is a \emph{graded  skew
PBW extension} if $A$ satisfies the conditions (i) and (ii) in
Proposition \ref{prop.grad A}.
\end{definition}

Some properties of graded skew PBW
extensions have been studied in \cite{Suarez2017Calabi,Suarez2017,SuarezReyesYesica2023}.
G\'omez and Su\'arez in \cite{GomezSuarez2020} gave necessary and sufficient conditions for a graded (trimmed) double Ore extension to be a
graded (quasi-commutative) skew PBW extension and, using this fact, they
proved that a graded skew PBW extension $A = \sigma(R)\langle x_1,x_2
\rangle$ of an Artin--Schelter regular algebra $R$ is Artin--Schelter
regular and that every such extension $A$ of a connected skew Calabi--Yau
algebra $R$ of dimension $d$ is skew Calabi--Yau of dimension $d+2$.

A graded connected algebra $B$ is called  \emph{Koszul} if there exists a graded projective resolution of the trivial module $K$
$$ \cdots \to P_{i}\to P_{i-1}\to \cdots \to P_0\to K\to 0$$
such that, for any $i\geq 0$, $P_{i}$ is generated in degree $i$. In \cite[Theorem 5.5]{Suarez2017}, the second-named author proved that if $A$ is a graded skew PBW extension of a finitely presented Koszul algebra $R$, then $A$ is Koszul.

\section{Classical and quantum Heisenberg algebras}\label{sect.qHeis}

The \emph{enveloping algebra of the 3-dimensional Heisenberg Lie algebra} (or \emph{Heisenberg algebra}, for short) is the unital associative algebra $\mathcal{H}$ generated by the variables $x, y, t$, with relations
\begin{equation}\label{eq.Heis}
  tx=xt,\quad yt=ty,\quad yx-xy = t.
\end{equation}
Using (\ref{eq.Heis}), we see that $\mathcal{H}$ is a
skew PBW extension of $K[t]$, i.e., $\mathcal{H} =
\sigma(K[t])\langle x,y\rangle$. Also, $\mathcal{H}$ is the
Ore extension of derivation type $K[t,x][y;\delta]$, where
$\delta(t)=0$ and $\delta(x)=t$.

Note that the Heisenberg algebra is Calabi--Yau of dimension 3 \cite[Proposition 4.6]{He2010} and Artin--Schelter regular \cite[Page 172]{ArtinSchelter}.

Kirkman and Small in \cite{Kirkman1993} studied a $q$-analogue of
$\mathcal{H}$, known as the quantum Heisenberg algebra. For $q\in K^*$
the \emph{quantum Heisenberg algebra} $\mathcal{H}_q$ is the algebra
generated by $t,x,y$ subject to the relations
\begin{equation}\label{eq.quanHeis}
  yx-qxy=t, \quad xt = qtx, \quad yt= q^{-1}ty.
\end{equation}

Note that $\mathcal{H}_q$ is a skew PBW extension of $K[t]$, i.e.,
$\mathcal{H}_q=\sigma(K[t])\langle
x,y\rangle$. Also $\mathcal{H}_q$ is the iterated Ore extension $$K[t][x;
\sigma_x][y; \sigma_y, \delta_y],$$ where $\sigma_x(t)=qt$, $\sigma_y(t)=q^{-1}t$, $\sigma_y(x)=qx$, $\delta_y(t)=0$, $\delta_y(x)=t$.

Gaddis in \cite{Gaddis2016} introduced and studied a two-parameter analog of the Heisenberg
algebra $\mathcal{H}$. For $p,q\in K^*$, the Gaddis \emph{two-parameter quantum Heisenberg algebra} $G\mathcal{H}_{p,q}$ is the
algebra generated by $t,x,y$ subject to the relations
\begin{equation}\label{eq.quantwoHeis}
yx-qxy=t, \quad xt = ptx, \quad yt= p^{-1}ty.
\end{equation}

Note that if $p=q$, the Gaddis two-parameter quantum Heisenberg algebra $G\mathcal{H}_{p,q}$ becomes the quantum Heisenberg algebra: $\mathcal{H}_{q}=G\mathcal{H}_{q,q}$.

According to  \cite{Gaddis2016}, Hayashi \cite{Hayashi1990} had already defined certain quantum algebras in two parameters, in which two of the relations of $G\mathcal{H}_{p,q}$ coincide, but the other has a small modification. The Hayashi \emph{two-parameter quantum Heisenberg algebra} $H\mathcal{H}_{p,q}$ is the
algebra generated by $t,x,y$ subject to the relations
\begin{equation}\label{eq.HquantwoHeis}
yx-qxy=t^2, \quad xt = ptx, \quad yt= p^{-1}ty.
\end{equation}

\begin{proposition}\label{prop.twoqHisSkew}
$G\mathcal{H}_{p,q}$ and $H\mathcal{H}_{p,q}$ are bijective skew PBW extension of endomorphism type  of $K[t]$ and  iterated Ore extensions.
\end{proposition}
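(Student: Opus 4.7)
The plan is to handle both algebras in parallel, noting that they differ only in the value of $yx - qxy$ (namely $t$ versus $t^2$). I would first realize each as an iterated Ore extension over $R = K[t]$, and then read off the skew PBW presentation directly from that construction.

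I begin by defining $\sigma_x \in \operatorname{Aut}_K(R)$ by $\sigma_x(t) = pt$, which is invertible since $p \in K^*$, and forming $B_1 = R[x; \sigma_x]$; this has $K$-basis $\{t^a x^b\}$ and encodes the relation $xt = ptx$. I then extend to $B_1$ a $K$-algebra automorphism $\sigma_y$ determined by $\sigma_y(t) = p^{-1}t$ and $\sigma_y(x) = qx$; well-definedness reduces to checking that the images satisfy $\sigma_y(x)\sigma_y(t) = p\,\sigma_y(t)\sigma_y(x)$, a one-line verification, and invertibility follows from $p, q \in K^*$. I further define a $\sigma_y$-derivation $\delta_y$ of $B_1$ by $\delta_y(t) = 0$ and $\delta_y(x) = t$ (resp.\ $t^2$); the sole compatibility needing verification is the $\sigma_y$-derivation identity applied to the relation $xt = ptx$, which I would check directly in each case. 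Setting $B_2 = B_1[y; \sigma_y, \delta_y]$ then produces an iterated Ore extension in which the relations \eqref{eq.quantwoHeis} (resp.\ \eqref{eq.HquantwoHeis}) hold by construction.

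The universal property of the free algebra yields a surjection from $G\mathcal{H}_{p,q}$ (resp.\ $H\mathcal{H}_{p,q}$) onto $B_2$; conversely, the defining relations permit any word in $t, x, y$ to be rewritten as a $K$-linear combination of the standard monomials $t^a x^b y^c$, so the surjection is an isomorphism. This simultaneously identifies the algebra with the stated iterated Ore extension and provides the free left $R$-module structure on $\{x^b y^c\}_{b,c\geq 0}$ required by condition (ii) of Definition \ref{def.skewpbwextensions}.

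The remaining conditions of Definition \ref{def.skewpbwextensions} are now immediate: (i) is obvious, (iii) holds with $c_{1,r} = \sigma_x(r)$ and $c_{2,r} = \sigma_y(r)$ with trivial remainder (so the induced derivations on $R$ vanish, giving the endomorphism type), and (iv) reads $yx - qxy \in R$ with $d_{1,2} = q \in K^*$. Bijectivity follows from the invertibility of $\sigma_x$ and $\sigma_y$ together with $q \in K^*$. The only step requiring genuine verification is the well-definedness of the Ore data $(\sigma_y, \delta_y)$ on $B_1$; everything else amounts to translating the two-step Ore tower into the skew PBW language.
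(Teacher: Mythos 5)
Your proof is correct and follows essentially the same route as the paper: both realize $G\mathcal{H}_{p,q}$ and $H\mathcal{H}_{p,q}$ as the iterated Ore extension $K[t][x;\sigma_x][y;\sigma_y,\delta_y]$ with the identical data $\sigma_x(t)=pt$, $\sigma_y(t)=p^{-1}t$, $\sigma_y(x)=qx$, $\delta_y(x)=t$ (resp.\ $t^2$), and then read off conditions (iii) and (iv) of Definition~\ref{def.skewpbwextensions} from the relations \eqref{eq.quantwoHeis} and \eqref{eq.HquantwoHeis}. The only difference is one of self-containedness: you verify the well-definedness of $(\sigma_y,\delta_y)$ and derive the free $K[t]$-basis $\{x^by^c\}$ (condition (ii)) from the Ore tower via the surjection-plus-rewriting argument, whereas the paper asserts the skew PBW conditions directly and cites Gaddis for the Ore realization of $G\mathcal{H}_{p,q}$.
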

\begin{proof}
The second and third relations in  (\ref{eq.quantwoHeis})
correspond to condition (\ref{eq.coef}) of Definition
\ref{def.skewpbwextensions}. The first defining relation in
(\ref{eq.quantwoHeis}) corresponds to condition (\ref{eq.variab}).
The endomorphisms and derivations of Proposition
\ref{prop.endoderiv} are given by $\sigma_x(t)=pt$, $\delta_x(t)=0$
and $\sigma_y(t)=p^{-1}t$, $\delta_y(t)=0$. Therefore
$G\mathcal{H}_{p,q}=\sigma(K[t])\langle x,y\rangle$ is a bijective
skew PBW extension of endomorphism type. Also (see~\cite[Proposition 3.4]{Gaddis2016}) $G\mathcal{H}_{p,q}$ is
the iterated Ore extension $$K[t][x; \sigma_x][y; \sigma_y,
\delta_y],$$ where $\sigma_x(t)=pt$, $\sigma_y(t)=p^{-1}t$,
$\sigma_y(x)=qx$, $\delta_y(t)=0$ and $\delta_y(x)=t$.

With a reasoning analogous to the previous one but changing $\delta_y(x)=t$ to $\delta_y(x)=t^2$ we have that $H\mathcal{H}_{p,q}$ is a bijective skew PBW extension of endomorphism type  of $K[t]$ and an iterated Ore extension.
\end{proof}

Let $A=R[x_1; \sigma_1,\delta_1]\cdots [x_{n}; \sigma_{n},\delta_{n}]$ be an iterated Ore extension. Then $A$ is called a \emph{graded iterated Ore extension} if $x_1,\dots, x_n$ have degree 1 in $A$, $R$ is in degree $0$,
\begin{equation}\label{eq.autiteratOre}
\sigma_i : R[x_1; \sigma_1,\delta_1]\cdots [x_{i-1}; \sigma_{i-1},\delta_{i-1}]  \to R[x_1; \sigma_1,\delta_1]\cdots [x_{i-1}; \sigma_{i-1},\delta_{i-1}]
\end{equation}
is a graded algebra automorphism and
\begin{equation}\label{eq.deriteratOre}
\delta_i : R[x_1; \sigma_1,\delta_1]\cdots [x_{i-1}; \sigma_{i-1},\delta_{i-1}](-1) \to R[x_1; \sigma_1,\delta_1]\cdots [x_{i-1}; \sigma_{i-1},\delta_{i-1}]
\end{equation}
 is a graded $\sigma_i$-derivation, $2\leq i\leq n$, where $(-1)$ in (\ref{eq.deriteratOre}) denotes a shift.\\
 
Phan \cite{Phan2012} proved that the graded Ore extension $A=R[x; \sigma,\delta]$ is  Koszul if and  only if $R$ is  Koszul. Thus the graded iterated Ore extension $A := R[x_1; \sigma_1,\delta_1]\cdots [x_{n}; \sigma_{n},\delta_{n}]$ is Koszul if and only if $R$ is Koszul. The graded skew Calabi--Yau property of connected graded algebras is preserved by graded Ore extensions (see \cite{Liu2014}). Thus, if $R$ is graded skew Calabi--Yau then the graded iterated Ore extension $A := R[x_1; \sigma_1,\delta_1]\cdots [x_{n}; \sigma_{n},\delta_{n}]$ is graded skew Calabi--Yau.

The class of graded iterated Ore extensions is strictly contained in the class of
graded skew PBW extensions \cite[Remark 2.11]{Suarez2017Calabi}. Let $\mathcal{G}$ be a finite dimensional Lie algebra over $K$ with basis $\{x_1, \dots,
x_n\}$ and $\mathcal{U}(\mathcal{G})$ its enveloping algebra. \emph{The homogenized enveloping
algebra} of $\mathcal{G}$ is $\mathcal{A}(\mathcal{G}):= T(\mathcal{G}\oplus Kz)/\langle
R\rangle$, where $T(\mathcal{G}\oplus Kz)$ is the tensor algebra, $z$ is a new variable,
and $R$ is spanned by $\{z\otimes x-x\otimes z\mid x\in \mathcal{G}\}\cup \{x\otimes y-y\otimes
x-[x,y]\otimes z\mid x,y\in \mathcal{G}\}$ (see \cite[Chapter 12]{Smith1994}). From the PBW theorem for $\mathcal{G}\otimes
K(z)$, considered as a Lie algebra over $K(z)$, we get that
$\mathcal{A}(\mathcal{G})$ is a skew PBW extension of $K[z]$ but it is not an iterated Ore extension. In particular,  $\mathcal{A}(\mathcal{G})$ is a graded skew PBW extension of $K[z]$ but it is not a graded iterated Ore extension.

We say that the graded algebra $A$ is \emph{generated in degree 1} if $A$ is generated as
a $K$-algebra by $A_1$. Note that the algebra $G\mathcal{H}_{p,q}$ is graded with $\deg x=\deg y= 1$ and  $\deg t =2$. Also, $H\mathcal{H}_{p,q}$ is graded with $\deg x=\deg y= \deg t =1$. So $H\mathcal{H}_{p,q}$ is generated in degree 1 but  $G\mathcal{H}_{p,q}$ is not. In contexts such as graded skew PBW extensions and graded iterated Ore extensions the fact that the algebra is generated in degree 1 is of relevance.

\begin{proposition}\label{prop.twoHisgrskpbw}
The algebra $H\mathcal{H}_{p,q}$ is a connected, graded iterated Ore extension and a graded skew PBW extension of $R=K[t]$.
\end{proposition}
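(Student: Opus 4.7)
The strategy is to leverage the two realizations of $H\mathcal{H}_{p,q}$ established in Proposition \ref{prop.twoqHisSkew} and then verify that both are compatible with the natural grading in which $\deg t = \deg x = \deg y = 1$. Connectedness is immediate: since all three generators sit in degree $1$, the degree-zero part of $H\mathcal{H}_{p,q}$ is just $K$.

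For the graded iterated Ore extension claim, I would start from the realization $H\mathcal{H}_{p,q} = K[t][x;\sigma_x][y;\sigma_y,\delta_y]$ with $\sigma_x(t)=pt$, $\sigma_y(t)=p^{-1}t$, $\sigma_y(x)=qx$, $\delta_y(t)=0$ and $\delta_y(x)=t^2$. Equipping $R=K[t]$ with its standard grading (with $t$ in degree $1$), one checks that $\sigma_x$ is a graded automorphism of $R$ (it rescales $t$ by $p$) and that $\sigma_y$ extends to a graded automorphism of $K[t][x;\sigma_x]$ (it rescales $t$ by $p^{-1}$ and $x$ by $q$, both degree-preserving). The key verification is for $\delta_y$: since $\delta_y(x)=t^2$ sends a degree-$1$ element to a degree-$2$ element and $\delta_y(t)=0$, the map $\delta_y$ increases degree by $1$ on generators and hence, extending via the $\sigma_y$-twisted Leibniz rule, on all of $K[t][x;\sigma_x]$. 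This is exactly the statement that $\delta_y$ is a graded $\sigma_y$-derivation from the shifted module $K[t][x;\sigma_x](-1)$ into $K[t][x;\sigma_x]$.

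For the graded skew PBW extension claim, I would invoke Definition \ref{def. graded skew PBW ext} and check conditions (i) and (ii) of Proposition \ref{prop.grad A} for $A=\sigma(K[t])\langle x,y\rangle$. Proposition \ref{prop.twoqHisSkew} already supplies bijectivity together with the data $\sigma_x(t)=pt$, $\sigma_y(t)=p^{-1}t$ and $\delta_x=\delta_y=0$ on $R$; the endomorphisms are graded (they rescale the single homogeneous generator $t$) and the derivations are trivially graded, settling condition (i). For condition (ii), the only relation of type (\ref{eq.variab}) to inspect is $yx-qxy=t^2$: here $d_{x,y}=q\in K=R_0$ and $t^2\in R_2\subseteq R_2+R_1x+R_1y$, as required.

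The argument is essentially degree-bookkeeping; there is no genuine obstacle once the Ore/skew PBW realizations of Proposition \ref{prop.twoqHisSkew} are in hand. The point to emphasize is that the right-hand side of $yx-qxy=t^2$ being homogeneous of degree $2$ in $t$ (rather than linear, as in $G\mathcal{H}_{p,q}$) is exactly what permits $t$ to be placed in degree $1$, making all the defining data homogeneous with respect to a single natural grading in which $H\mathcal{H}_{p,q}$ is generated in degree $1$.
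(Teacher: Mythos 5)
Your proposal is correct and follows essentially the same route as the paper: establish connectedness from the degree-zero component, exhibit $H\mathcal{H}_{p,q}$ as the iterated Ore extension $K[t][x;\sigma_x][y;\sigma_y,\delta_y]$, and check that $\sigma_x,\sigma_y$ are graded automorphisms and $\delta_y$ is a graded $\sigma_y$-derivation of degree $1$. The only cosmetic difference is that you verify conditions (i) and (ii) of Proposition~\ref{prop.grad A} directly for the skew PBW claim, whereas the paper deduces it from the containment of graded iterated Ore extensions in graded skew PBW extensions; both are valid and amount to the same degree bookkeeping.
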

\begin{proof}
As the homogeneous component of degree 0 of $H\mathcal{H}_{p,q}$ is $K$, then $\mathcal{H}_{p,q}$ is connected. Now $H\mathcal{H}_{p,q}$ is
the iterated Ore extension $$K[t][x; \sigma_x][y; \sigma_y,
\delta_y],$$ where $\sigma_x(t)=pt$, $\sigma_y(t)=p^{-1}t$,
$\sigma_y(x)=qx$, $\delta_y(t)=0$ and $\delta_y(x)=t^2$. It is clear that $\sigma_x$ and $\sigma_y$ are graded algebra automorphisms and  $\delta_y$ is a graded $\sigma_y$-derivation of degree $1$. So $H\mathcal H_{p,q}$ is indeed a graded iterated Ore extension and therefore a graded skew PBW extension of $K[t]$.
\end{proof}

Note that $G\mathcal{H}_{p,q}$ is an iterated Ore extension and a skew PBW extension. Now, $G\mathcal{H}_{p,q}$ is graded but it is not a graded iterated Ore extension or a graded skew PBW extension, since $\deg t=2$.\\

A graded algebra is right (left) noetherian if and only if it is graded right
(left) noetherian \cite[Proposition 1.4]{Levasseur1992}, which means that every graded right (left) ideal is finitely
generated. In particular, we deduce the follwing.

\begin{corollary}\label{cor.twoqisnoeth}
The algebras $G\mathcal{H}_{p,q}$ and $H\mathcal{H}_{p,q}$ are graded left noetherian and graded right noetherian.
\end{corollary}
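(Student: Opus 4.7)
The plan is to reduce graded noetherianity to ordinary noetherianity via the cited result of Levasseur (Proposition 1.4 of \cite{Levasseur1992}), and to establish ordinary noetherianity by iterating the Hilbert basis theorem for Ore extensions along the presentations of $G\mathcal{H}_{p,q}$ and $H\mathcal{H}_{p,q}$ as iterated Ore extensions of $K[t]$.

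More precisely, I would first invoke Proposition \ref{prop.twoqHisSkew} and Proposition \ref{prop.twoHisgrskpbw}, which exhibit both algebras as iterated Ore extensions
\[
K[t][x;\sigma_x][y;\sigma_y,\delta_y],
\]
differing only in the choice of $\delta_y(x)$. Since $K[t]$ is a (commutative) principal ideal domain, it is left and right noetherian. The endomorphisms $\sigma_x$ and $\sigma_y$ appearing in these presentations are in fact automorphisms (they act by scaling on the generator $t$, and $\sigma_y$ also scales $x$, all by units of $K^*$). Hence the classical Hilbert basis theorem for Ore extensions applies: an Ore extension $S[z;\sigma,\delta]$ of a left (resp.\ right) noetherian ring $S$ by an automorphism $\sigma$ and a $\sigma$-derivation $\delta$ is itself left (resp.\ right) noetherian. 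Applying this once gives that $K[t][x;\sigma_x]$ is left and right noetherian, and applying it a second time yields that $G\mathcal{H}_{p,q}$ and $H\mathcal{H}_{p,q}$ are left and right noetherian.

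Finally, both algebras carry $\mathbb{N}$-gradings (with $\deg t = 2$ in the Gaddis case and $\deg t = 1$ in the Hayashi case, as noted just above Proposition \ref{prop.twoHisgrskpbw}), so the result from \cite[Proposition 1.4]{Levasseur1992} quoted in the excerpt — that a graded algebra is right (left) noetherian if and only if it is graded right (left) noetherian — immediately converts the noetherianity established above into graded noetherianity on both sides.

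There is no real obstacle: the only point requiring attention is to verify that the maps $\sigma_x$ and $\sigma_y$ in the Ore extension presentation really are bijective (so that the Hilbert basis theorem for Ore extensions can be applied), which is clear from the explicit formulas. Everything else is a direct citation of results already recalled in the paper.
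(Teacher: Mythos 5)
Your proposal is correct and follows essentially the same route the paper intends: the paper leaves the proof implicit (``In particular, we deduce the following'' after quoting Levasseur), relying on the iterated Ore extension presentations from Proposition~\ref{prop.twoqHisSkew} to get ordinary noetherianity and then Levasseur's equivalence to pass to graded noetherianity. You have merely made explicit the Hilbert basis theorem step and the verification that $\sigma_x$ and $\sigma_y$ are automorphisms, which is exactly the intended argument.
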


\begin{theorem}\label{teo.TwoHKosz}
The  two-parameter quantum Heisenberg algebra $H\mathcal{H}_{p,q}$ is Koszul, graded skew Calabi--Yau of dimension 3 and Artin--Schelter regular.
Furthermore, the Ore extensions $H\mathcal{H}_{p,q}[z; \nu]$ and $H\mathcal{H}_{p,q}[z^{\pm}; \nu]$ are graded Calabi--Yau, where $\nu$ is the Nakayama
automorphism of $H\mathcal{H}_{p,q}$.
\end{theorem}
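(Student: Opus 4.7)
The plan is to combine the realization of $H\mathcal{H}_{p,q}$ as a graded iterated Ore extension of $K[t]$ (Proposition~\ref{prop.twoHisgrskpbw}) with the preservation theorems for Koszulness and the skew Calabi--Yau property already cited in the excerpt. Since $R = K[t]$ is a connected, graded, Koszul algebra (as any polynomial ring in one variable) and is graded Calabi--Yau of dimension $1$, the strategy is to transport each of these properties along the two successive Ore extensions
\[
K[t] \;\to\; K[t][x;\sigma_x] \;\to\; K[t][x;\sigma_x][y;\sigma_y,\delta_y] = H\mathcal{H}_{p,q}.
\]

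First I would establish Koszulness. By Phan's theorem (\cite{Phan2012}) a graded Ore extension $S[z;\sigma,\delta]$ is Koszul if and only if $S$ is Koszul; applying this twice to the tower above, and using the verification in Proposition~\ref{prop.twoHisgrskpbw} that $\sigma_x,\sigma_y$ are graded automorphisms and $\delta_y$ is a graded $\sigma_y$-derivation of the correct degree, we get that $H\mathcal{H}_{p,q}$ is Koszul. Next, for the skew Calabi--Yau claim, I invoke Liu's result (\cite{Liu2014}) that the graded skew Calabi--Yau property of connected graded algebras is preserved by graded Ore extensions, with the dimension increasing by $1$ at each step. Starting from $K[t]$ (skew Calabi--Yau of dimension $1$), two iterations yield that $H\mathcal{H}_{p,q}$ is graded skew Calabi--Yau of dimension $3$. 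Since $H\mathcal{H}_{p,q}$ is connected graded, the Reyes--Rogalski--Zhang equivalence (\cite{ReyesRogalskiZhang}) between skew Calabi--Yau and Artin--Schelter regular for connected algebras immediately gives Artin--Schelter regularity.

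For the final assertion about $H\mathcal{H}_{p,q}[z;\nu]$, the idea is to twist by the Nakayama automorphism. There is a general principle (implicit in~\cite{Liu2014} and also recorded in \cite{ReyesRogalskiZhang}) that if $A$ is a graded skew Calabi--Yau algebra of dimension $d$ with Nakayama automorphism $\nu$, and $\sigma$ is a graded algebra automorphism of $A$, then the graded Ore extension $A[z;\sigma]$ is again skew Calabi--Yau of dimension $d+1$ with Nakayama automorphism $\widetilde{\nu}$ satisfying $\widetilde{\nu}|_A = \nu \circ \sigma^{-1}$ and $\widetilde{\nu}(z) = cz$ for a suitable scalar. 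Choosing $\sigma = \nu$ makes $\widetilde{\nu}$ inner (in fact trivial on $A$, and on $z$ absorbable by a rescaling), so $H\mathcal{H}_{p,q}[z;\nu]$ is graded Calabi--Yau of dimension $4$. For the Laurent version $H\mathcal{H}_{p,q}[z^{\pm};\nu]$ I would argue that inverting the normal element $z$ is an Ore localization that preserves the (skew) Calabi--Yau property, and since the Nakayama automorphism of the extension was already inner, the localization remains Calabi--Yau.

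The routine part is the verification of gradings and the assembly of citations; the main subtle point is the last step, namely justifying that the Nakayama automorphism of $A[z;\nu]$ becomes inner precisely when one twists by $\nu$, and that passing to the Laurent extension preserves the Calabi--Yau property rather than merely the skew Calabi--Yau one. I would therefore make sure to quote the explicit Nakayama-automorphism formula for Ore extensions (from \cite{Liu2014}) and to check that the scalar appearing on $z$ is absorbed by an inner automorphism, so the resulting Nakayama automorphism is genuinely the identity.
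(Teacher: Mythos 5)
The first three claims are handled exactly as in the paper: you realize $H\mathcal{H}_{p,q}$ as the connected graded iterated Ore extension $K[t][x;\sigma_x][y;\sigma_y,\delta_y]$ via Proposition~\ref{prop.twoHisgrskpbw}, apply Phan's theorem for Koszulness, \cite[Theorem 3.3]{Liu2014} twice for the graded skew Calabi--Yau property of dimension $3$, and \cite[Lemma 1.2]{ReyesRogalskiZhang} for Artin--Schelter regularity. That part is correct and identical in route to the paper's proof.

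The gap is in the final step. The Nakayama automorphism $\widetilde{\nu}$ of $H\mathcal{H}_{p,q}[z;\nu]$ satisfies $\widetilde{\nu}|_{H\mathcal{H}_{p,q}}=\nu^{-1}\circ\nu=\mathrm{id}$ and $\widetilde{\nu}(z)=\operatorname{hdet}(\nu)\,z+b$, so the whole question is whether the scalar $\operatorname{hdet}(\nu)$ equals $1$. Your proposal to dispose of this scalar by ``a rescaling'' or by absorbing it ``by an inner automorphism'' does not work: replacing $z$ by $\lambda z$ still yields $\lambda z\mapsto \operatorname{hdet}(\nu)\lambda z$, and in a connected graded algebra the only units are the nonzero scalars, so every inner automorphism is trivial and nothing can be absorbed. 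The equality $\operatorname{hdet}(\nu)=1$ is a genuine theorem, not a normalization; the paper obtains it from \cite[Theorem 6.3]{ReyesRogalskiZhang}, which applies precisely because $H\mathcal{H}_{p,q}$ is noetherian, connected, Koszul and graded skew Calabi--Yau (this is why Koszulness is needed for the last assertion, not just as a separate conclusion). With $\operatorname{hdet}(\nu)=1$ established, \cite[Proposition 7.3]{ReyesRogalskiZhang} gives that both $H\mathcal{H}_{p,q}[z;\nu]$ and $H\mathcal{H}_{p,q}[z^{\pm};\nu]$ are graded Calabi--Yau in one stroke, so your separate localization argument for the Laurent extension, while plausible, is also unnecessary once the correct ingredient is cited.
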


\begin{proof}
By Proposition \ref{prop.twoHisgrskpbw} we have that $H\mathcal{H}_{p,q}$ is the connected graded iterated Ore extension $K[t][x; \sigma_x][y; \sigma_y,
\delta_y]$. As $K[t]$ is Koszul then $H\mathcal{H}_{p,q}$ is Koszul (see \cite[Corollary 1.3]{Phan2012}). Now as $K[t]$ is graded skew Calabi--Yau of dimension 1 then $K[t][x; \sigma_x]$ is graded skew Calabi--Yau of dimension 2 and thus $K[t][x; \sigma_x][y; \sigma_y,
\delta_y]=H\mathcal{H}_{p,q}$ is graded skew Calabi--Yau of dimension 3, by \cite[Theorem 3.3]{Liu2014}. Since $H\mathcal{H}_{p,q}$ is connected  and graded skew Calabi- Yau then it is Artin-Scelter regular, by \cite[Lemma 1.2]{ReyesRogalskiZhang}. Let $\nu$ be a Nakayama automorphism of  $H\mathcal{H}_{p,q}$ and $z$ be a new variable. As $H\mathcal{H}_{p,q}$ is noetherian, connected, and by the previous considerations it is Koszul and graded skew Calabi--Yau, then $\operatorname{hdet}(\nu) = 1$ \cite[Theorem 6.3]{ReyesRogalskiZhang}, where $\operatorname{hdet}$ is the homological determinant. As, in addition, $H\mathcal{H}_{p,q}$ is Artin--Schelter regular, then by \cite[Proposition 7.3]{ReyesRogalskiZhang} we have that   $H\mathcal{H}_{p,q}[z; \nu]$ and $H\mathcal{H}_{p,q}[z^{\pm}; \nu]$ are graded Calabi--Yau.
\end{proof}
Note that the Calabi--Yau property is not preserved by Ore extensions. For example, the Jordan plane $A = K\langle x, y\rangle/\langle yx-xy-x^2\rangle= K[x][y;\sigma,\delta]$, where $\sigma(x) = x$ and $\delta(x) = x^2$, is a graded Ore extension of a Calabi--Yau algebra $K[x]$,
but $A$ is not Calabi--Yau, since the Nakayama automorphism $\nu$ of the Jordan plane is given by $\nu(x) = x$ and $\nu(y) = 2x + y$, which is not inner. More generally, the algebras of the form $A _h= K\langle x, y\rangle/\langle yx-xy-h(x)\rangle$, where $h(x)$ is an arbitrary polynomial in $x$, is an Ore extension of $K[x]$ which is Calabi--Yau if and only if $\frac{d}{dx}h=0$, which in characteristic $0$ holds only for the commutative polynomial algebra $A_0$ and the Weyl algebra $A_1$, and in prime characteristic $\ell>0$ if $h$ is a polynomial in $x^\ell$ (see~\cite{LS21} for more details).

Gaddis \cite[Proposition 3.2]{Gaddis2016} showed that $G\mathcal{H}_{p,q}$ is Artin--Schelter regular, using the fact that $t$ is a normal regular element.

\begin{corollary}\label{cor.GTwoHKosz}
The Gaddis two-parameter quantum Heisenberg algebra $G\mathcal{H}_{p,q}$ is graded skew Calabi--Yau of dimension 3.
\end{corollary}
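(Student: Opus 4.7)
The plan is to deduce this directly from Gaddis's Artin--Schelter regularity result (recalled in the sentence just before the corollary) together with the Reyes--Rogalski--Zhang equivalence between the Artin--Schelter regular and graded skew Calabi--Yau properties for connected graded algebras, mirroring what was done for $H\mathcal{H}_{p,q}$ in Theorem~\ref{teo.TwoHKosz} but avoiding Liu's Ore-extension machinery, since the grading $\deg t=2$ prevents us from viewing the iterated Ore presentation of $G\mathcal{H}_{p,q}$ as a graded iterated Ore extension in the sense of the definition used earlier.

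First I would confirm that $G\mathcal{H}_{p,q}$ is a connected graded algebra with $\deg t=2$ and $\deg x=\deg y=1$: each defining relation in (\ref{eq.quantwoHeis}) is homogeneous of degree $2$ under this assignment, so the grading descends to $G\mathcal{H}_{p,q}$, and the degree-zero part is $K$. Next, by Proposition~\ref{prop.twoqHisSkew}, $G\mathcal{H}_{p,q}$ is a bijective iterated Ore extension of $K[t]$ with automorphism/derivation data, hence noetherian of finite global dimension; since $K[t]$ has global dimension $1$ and each of the two Ore extensions increases global dimension by exactly $1$ (the automorphisms involved being bijective), the global dimension of $G\mathcal{H}_{p,q}$ equals $3$.

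I would then invoke Gaddis's \cite[Proposition 3.2]{Gaddis2016}, already cited in the paragraph preceding the corollary, which asserts that $G\mathcal{H}_{p,q}$ is Artin--Schelter regular. Combining this with \cite[Lemma 1.2]{ReyesRogalskiZhang}, the equivalence of the Artin--Schelter regular and skew Calabi--Yau conditions for connected graded noetherian algebras yields that $G\mathcal{H}_{p,q}$ is graded skew Calabi--Yau of the same dimension $3$, as desired.

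The main subtlety, more than an obstacle, is that one must work with the non-standard grading $\deg t=2$; this rules out citing Liu's theorem as in the proof of Theorem~\ref{teo.TwoHKosz}, but is harmless for the Reyes--Rogalski--Zhang equivalence, which only requires connectedness, noetherianity, and finite global dimension with the correct $\mathrm{Ext}$ computation built into the Artin--Schelter regular hypothesis. No separate verification of the Nakayama automorphism is needed, since its existence is furnished by the equivalence itself.
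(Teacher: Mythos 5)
Your proposal is correct and follows essentially the same route as the paper: cite Gaddis's \cite[Proposition 3.2]{Gaddis2016} for Artin--Schelter regularity of global dimension $3$, then apply \cite[Lemma 1.2]{ReyesRogalskiZhang} to the connected graded algebra $G\mathcal{H}_{p,q}$ to conclude it is graded skew Calabi--Yau of dimension $3$. The extra scaffolding you add (checking the grading with $\deg t=2$, noetherianity, and the claim that each Ore step raises global dimension by exactly one --- the last of which is not needed and not true in general for extensions with a derivation) is harmless but superfluous, since the dimension is already supplied by Gaddis's result.
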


\begin{proof}
By \cite[Proposition 3.2]{Gaddis2016} we have that $G\mathcal{H}_{p,q}$ is Artin--Schelter regular of global dimension 3. Then by \cite[Lemma 1.2]{ReyesRogalskiZhang}, $G\mathcal{H}_{p,q}$ is graded skew Calabi--Yau of dimension 3, since it is connected.
\end{proof}

Gaddis \cite[Proposition 3.14]{Gaddis2016} also showed that if $G\mathcal{H}_{p,q}\cong G\mathcal{H}_{p', q'}$ then $(p', q')$ is one of the following tuples: $(p, q)$; $(q, p)$; $(p^{-1}, q^{-1})$;
$(q^{-1}, p^{-1})$, in spite of the seemingly lack of symmetry between $p$ and $q$ in the definition of $G\mathcal{H}_{p,q}$. 

\section{Generalized Heisenberg algebras}\label{sect.GenHeis}
Generalized Heisenberg algebras were formally introduced in \cite{Curado2001}; later, L\"u and Zhao devised in \cite{Lu2015} a more general definition for this type of algebras.
\begin{definition}\label{def.GenHeis}
Given a polynomial $f\in K[t]$, the \emph{generalized Heisenberg}
algebra associated to $f$, denoted $\mathcal{H}(f)$, is the algebra with generators $x$,
$y$ and $t$, with defining relations:
\begin{equation}\label{eq.genHeis}
  tx=xf,\quad yt=fy,\quad yx-xy = f-t.
\end{equation}
\end{definition}
In the present paper we only study generalized Heisenberg algebras $\mathcal{H}(f)$ according to the previous definition, which differs from that in~\cite{Curado2001}.

Many properties of skew PBW extensions have been studied. We are thus interested in knowing when a generalized Heisenberg algebra $\mathcal{H}(f)$ is a skew PBW extension. In the next theorem we give necessary and sufficient conditions for a generalized Heisenberg algebra to be a skew PBW extension.

\begin{theorem}\label{lema.GenHeis} The following are equivalent for a generalized Heisenberg algebra $\mathcal{H}(f)$:
\begin{enumerate}
\item[\rm (i)] $\mathcal{H}(f)$ is a bijective skew PBW extension of endomorphism type of $R=K[t]$;
\item[\rm (ii)] $\mathcal{H}(f)$ is a skew PBW extension of $K$;
\item[\rm (iii)] $\mathcal{H}(f)$  is left or right noetherian;
\item[\rm (iv)] $\deg f=1$
\end{enumerate}
\end{theorem}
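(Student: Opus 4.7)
The plan is to prove the six implications (iv) $\Rightarrow$ (i), (iv) $\Rightarrow$ (ii), (iv) $\Rightarrow$ (iii), (i) $\Rightarrow$ (iv), (ii) $\Rightarrow$ (iv), (iii) $\Rightarrow$ (iv). The implications from (iv) are handled by an explicit construction: writing $f = at + b$ with $a \in K^*$ and $b \in K$, one solves $tx = xf = axt + bx$ for $xt = a^{-1}(t - b)\,x$, giving $\sigma_x(t) = a^{-1}(t - b)$ and $\delta_x = 0$; the relation $yt = fy$ gives $\sigma_y(t) = at + b$ and $\delta_y(t) = 0$. Both $\sigma_x$ and $\sigma_y$ are $K$-algebra automorphisms of $K[t]$, and $yx - xy = (a-1)t + b \in K[t]$ fits (\ref{eq.variab}) with $d_{x,y} = 1$. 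This realizes $\mathcal{H}(f) = \sigma(K[t])\langle x, y\rangle$ as a bijective skew PBW extension of endomorphism type, proving (i). Setting additionally $\sigma_y(x) = x$ and $\delta_y(x) = (a-1)t + b$ identifies $\mathcal{H}(f)$ with the iterated Ore extension $K[t][x;\sigma_x][y;\sigma_y,\delta_y]$ of the noetherian ring $K[t]$, giving (iii). For (ii), a direct check shows the defining relations fit (\ref{eq.variab}) over $R = K$ with generators $\{x, y, t\}$ under a suitable ordering such as $x < y < t$, so $\mathcal{H}(f) = \sigma(K)\langle x, y, t\rangle$.

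For (i) $\Rightarrow$ (iv), I would invoke Proposition \ref{prop.endoderiv} to extract an injective $\sigma_x \in \mathrm{End}(K[t])$ and a $\sigma_x$-derivation $\delta_x$ with $xt = \sigma_x(t)\,x + \delta_x(t)$; setting $g = \sigma_x(t)$, injectivity gives $\deg g \geq 1$. A straightforward induction yields $xt^n = g(t)^n\, x + h_n(t)$ for some $h_n \in K[t]$, hence $xf(t) = f(g(t))\,x + H(t)$ for some $H \in K[t]$. Substituting into the defining relation $tx = xf(t)$ and comparing coefficients in the free $K[t]$-basis $\{x^{\alpha}\}$ forces $t = f(g(t))$, so $1 = \deg f \cdot \deg g$ gives $\deg f = 1$. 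The argument for (ii) $\Rightarrow$ (iv) is parallel: condition (\ref{eq.variab}) applied to the pair $(x, t)$ requires $tx - d \cdot xt \in K + Kx + Ky + Kt$ for some $d \in K^*$. Substituting $tx = \sum a_i xt^i$ and using the independence of $\{xt^i\}_{i \geq 2}$ in the PBW basis forces $a_i = 0$ for $i \geq 2$; the case $\deg f = 0$ is ruled out because it would require the impossible $d = 0$.

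The principal obstacle is (iii) $\Rightarrow$ (iv), proven by contrapositive. For $\deg f = 0$, say $f = \lambda \in K$, the relation $yx - xy = \lambda - t$ lets me eliminate $t = \lambda - (yx - xy)$, rewriting $\mathcal{H}(\lambda) \cong K\langle x, y\rangle / (ux,\, yu)$ with $u = yx - xy$. I would apply the Bergman diamond lemma to the cubic relations $yx^2 - xyx$ and $y^2 x - yxy$ to establish the normal form basis $\{x^a(yx)^k y^c\}_{a,k,c \geq 0}$. Then the chain of left ideals $L_n = \sum_{j=0}^n \mathcal{H}(\lambda) \cdot uy^j$ is strictly ascending: direct normal-form analysis of $mu$ for $m$ ranging over standard monomials ending in $x$ shows the monomial $yxy^{n+1}$ appears with nonzero coefficient in $uy^{n+1}$ but cannot appear in any element of $L_n$, so $uy^{n+1} \notin L_n$. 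The right-noetherian case is handled symmetrically via right ideals generated by $\{x^iu : i \geq 0\}$. For $\deg f \geq 2$, a parallel diamond-lemma analysis with the rewriting system $tx \to xf(t)$, $yt \to f(t)y$, $yx \to xy + (f - t)$ would establish the PBW-type basis $\{x^a t^b y^c\}$; one then exploits the iterated composition $f^{(n)}(t) = f \circ \cdots \circ f(t)$ of degree $(\deg f)^n$, which enters through the identity $t^n x = x f^{(n)}(t)$, to construct a non-finitely-generated one-sided ideal. The bulk of the technical work lies in these normal-form analyses and the identification of witness monomials confirming strict inclusions; alternatively one may cite the noetherianity characterization already available in the literature on generalized Heisenberg algebras.
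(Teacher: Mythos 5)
Your proposal is correct and covers all the required implications, but it takes a genuinely different route from the paper in two places. For (i) $\Rightarrow$ (iv) the paper argues indirectly: a bijective skew PBW extension of the noetherian ring $K[t]$ is noetherian, and then $\deg f=1$ follows from the noetherianity criterion of \cite[Proposition 2.4]{Lopes2017}; you instead extract $\sigma_x$ via Proposition~\ref{prop.endoderiv}, push the relation $tx=xf$ through the free $K[t]$-basis to obtain the functional equation $t=f(\sigma_x(t))$, and conclude $1=\deg f\cdot\deg\sigma_x(t)$. This is more elementary and self-contained, and it pinpoints exactly where the degree restriction comes from. For (iii) $\Leftrightarrow$ (iv) the paper simply quotes the same result of \cite{Lopes2017}, which is also your stated fallback; your primary plan is a from-scratch diamond-lemma argument. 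Your $\deg f=0$ analysis does check out: with $u=yx-xy$ the relation $ux=0$ says $x$ commutes with $yx$, so for $m=x^a(yx)^k$ one gets $muy^j=x^a(yx)^{k+1}y^j-x^{a+1}(yx)^ky^{j+1}$ while $muy^j=0$ for $m$ ending in $y$, and the monomial $yxy^{n+1}$ indeed witnesses $uy^{n+1}\notin L_n$. The $\deg f\geq 2$ branch, however, is only gestured at, so as written that case rests on the citation anyway --- which puts you on the same footing as the paper there. The remaining implications coincide with the paper's up to cosmetic choices: for (ii) $\Rightarrow$ (iv) you use the pair $(t,x)$ for both degenerate degrees where the paper uses $(y,x)$ when $\deg f>1$, and for (iv) $\Rightarrow$ (ii) you should order the generators as $x<t<y$ so that the standard monomials are the known basis $\{x^it^jy^k\}$ rather than $\{x^iy^jt^k\}$, whose freeness you would otherwise still have to justify.
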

\begin{proof}
Suppose that $\mathcal{H}(f)$ is a bijective skew PBW extension of endomorphism type of $R=K[t]$. Since $K[t]$ is noetherian, then so is $\mathcal{H}(f)$. By \cite[Proposition 2.4]{Lopes2017}, $\deg f=1$.

Now suppose that $f(t)=qt+k$ with $k,q\in K$ and $q\neq 0$. Then, by \cite{Lopes2017}, $\mathcal{H}(f)$ is the iterated Ore extension $K[t][x;\sigma_x][y;\sigma_y,\delta_y]$, where $\sigma_x(t)=q^{-1}t-q^{-1}k$,
$\sigma_y(x)=x$, $\sigma_y(t)=qt+k$, all bijective on $R$, and $\delta _y(x)=(q-1)t+k$. It follows that $\mathcal{H}(f)$ is a left free $R$-module, with basis the
set of standard monomials
$\{x^{\alpha_1}y^{\alpha_2}\mid \alpha_1, \alpha_2\in \mathbb{N}\}$.
Then $xt=(q^{-1}t-q^{-1}k)x$,
$yt=(qt+k)y$, and $yx-xy = (q-1)t+k$. Thus the conditions
(\ref{eq.coef}) and (\ref{eq.variab}) of the definition of a skew PBW
extension are satisfied for $R=K[x]$. 

Moreover, from (\ref{eq.genHeis}) we have that $xt-q^{-1}tx=-q^{-1}kx$,
$yt-qty=ky,\quad yx-xy = (q-1)t+k$. Thus the condition
(\ref{eq.variab}) of the definition of a skew PBW extension is satisfied
for $R=K$ and the variables $t,x,y$. If $\deg f>1$ then, for every $\lambda\in K^*$,
$yx-\lambda xy =(1-\lambda)xy + (f-t)\notin K+Kt+Kx + Ky$, which contradicts the relation
(\ref{eq.variab}) of Definition \ref{def.skewpbwextensions}. Finally, if $\deg f=0$, say $f(t)=k\in K$, then for every $\lambda\in K^*$, $xt-\lambda tx=xt-k\lambda x\notin K+Kt+Kx + Ky$, again contradicting relation
(\ref{eq.variab}) of Definition~\ref{def.skewpbwextensions}.

By \cite[Proposition 2.4]{Lopes2017} we have that $\mathcal{H}(f)$
is left (right) noetherian if and only if $\deg f = 1$. Thus,
$\mathcal{H}(f)$ is left (right) noetherian if and only if it is a
skew PBW extension of $K[t]$ or  $K$.
\end{proof}

\begin{proposition}\label{prop.GenHskewy}
Let $0\neq f\in K[t]$. Then a generalized
Heisenberg algebra $\mathcal{H}(f)$ is a skew PBW extension of $R=K\langle
x,t\rangle/\langle tx - xf \rangle$ in the variable $y$.
\end{proposition}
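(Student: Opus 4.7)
The plan is to realize $\mathcal{H}(f)$ as an Ore extension $R[y;\sigma,\delta]$, which is the prototypical single-variable skew PBW extension. I will first exhibit an endomorphism $\sigma$ and a $\sigma$-derivation $\delta$ on $R$ that encode the relations $yt = fy$ and $yx = xy + (f-t)$ of $\mathcal{H}(f)$, and then identify $R[y;\sigma,\delta]$ with $\mathcal{H}(f)$ itself.

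Define $\sigma\colon R \to R$ on generators by $\sigma(t) = f$ and $\sigma(x) = x$. To see that $\sigma$ descends to $R$, iterate the defining relation $tx = xf$ to obtain $t^n x = xf^n$ for all $n \geq 0$, hence $g(t)x = xg(f)$ for every $g \in K[t]$; specializing at $g = f$ gives $\sigma(t)\sigma(x) = fx = xf(f) = \sigma(x)\sigma(f(t))$, so the relation $tx - xf$ is preserved. Next, define $\delta\colon R \to R$ as the $\sigma$-derivation determined by $\delta(t) = 0$ and $\delta(x) = f - t$; a simple induction using the twisted Leibniz rule yields $\delta \equiv 0$ on $K[t]\subseteq R$, and then well-definedness reduces to checking that $\delta(tx) = \sigma(t)\delta(x) = f(f-t)$ and $\delta(xf) = \delta(x)f = (f-t)f$ agree, which holds because $t$ and $f$ commute inside the polynomial subalgebra $K[t]$.

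With $\sigma$ and $\delta$ in hand, form $A := R[y;\sigma,\delta]$. By the Ore extension construction, $A$ is a free left $R$-module with basis $\{y^n\}_{n\geq 0}$, and in $A$ the relations $yt = \sigma(t)y + \delta(t) = fy$ and $yx = \sigma(x)y + \delta(x) = xy + (f-t)$ hold, together with the inherited relation $tx = xf$ from $R$. These are precisely the defining relations of $\mathcal{H}(f)$, so the universal property of $\mathcal{H}(f)$ (as a quotient of $K\langle x,t,y\rangle$) yields a surjection $\phi\colon \mathcal{H}(f) \to A$ fixing generators, while the universal property of the Ore extension—applied to the canonical map $R\to\mathcal{H}(f)$ together with the element $y\in\mathcal{H}(f)$, which satisfies the required skew commutation on the generators of $R$—produces an inverse $\psi\colon A\to\mathcal{H}(f)$. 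Thus $\mathcal{H}(f) \cong R[y;\sigma,\delta] = \sigma(R)\langle y\rangle$, and conditions (i)--(iv) of Definition \ref{def.skewpbwextensions} are immediate in this single-variable setting, with (iv) trivial and (iii) given by $yr - \sigma(r)y = \delta(r) \in R$.

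The main obstacle is the well-definedness verification of $\sigma$ and $\delta$ on the noncommutative ring $R$, which depends on the iterated identity $g(t)x = xg(f)$ and on the commutativity of $t$ with $f(t)$ in $K[t]\subseteq R$. Once these are in place, the identification with $\mathcal{H}(f)$ and the check of the skew PBW axioms proceed routinely.
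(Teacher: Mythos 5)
Your route is genuinely different from the paper's. The paper simply invokes the known $K$-basis $\{x^{i}t^{j}y^{k}\}$ of $\mathcal{H}(f)$ to conclude that $\mathcal{H}(f)$ is free as a left $R$-module on $\{y^{k}\}$, and then reads off the relations $yt-fy=0$ and $yx-xy=f-t$. You instead build the Ore extension $R[y;\sigma,\delta]$ from scratch and identify it with $\mathcal{H}(f)$ by a pair of universal properties. Your well-definedness checks are correct: iterating $tx=xf$ gives $g(t)x=xg(f)$, so $\sigma$ descends; $\delta$ vanishes on $K[t]$ by the twisted Leibniz rule; and $\delta(tx)=f\,(f-t)=(f-t)f=\delta(xf)$ because both factors lie in $K[t]$. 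This buys something the paper's argument does not: it avoids presupposing the basis theorem for $\mathcal{H}(f)$ (in effect you reprove the left-freeness required in condition (ii)), at the cost of extra bookkeeping.

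There is, however, one genuine gap at the very end, precisely where you declare conditions (i)--(iv) ``immediate''. Condition (iii) of Definition~\ref{def.skewpbwextensions} demands, for \emph{every} nonzero $r\in R$, an element $c_{1,r}\in R\setminus\{0\}$ with $yr-c_{1,r}y\in R$; your identity $yr=\sigma(r)y+\delta(r)$ supplies $c_{1,r}=\sigma(r)$ only when $\sigma(r)\neq 0$, i.e.\ it verifies (iii) only if $\sigma$ is injective (as Proposition~\ref{prop.endoderiv} in any case forces). Your $\sigma$ fixes $x$ and sends $t\mapsto f(t)$, so on the basis $\{x^{i}t^{j}\}$ of $R$ it acts by $x^{i}t^{j}\mapsto x^{i}f(t)^{j}$, which is injective if and only if $\deg f\geq 1$. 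When $f=c$ is a nonzero constant one has $y(t-c)=fy-cy=0$ with $t-c\neq 0$ in $R$, and left-freeness on $\{y^{k}\}$ shows that no nonzero $c_{1,t-c}$ can exist, so condition (iii) genuinely fails there; the statement really requires $\deg f\geq 1$ rather than merely $f\neq 0$. To be fair, the paper's own proof shares this blind spot---it checks (iii) only on the generators $t$ and $x$---but since your argument routes (iii) explicitly through $\sigma$, you should add the injectivity observation and the degree restriction it entails.
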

\begin{proof}
Since  $\{x^{\alpha_1}t^{\alpha_2}y^{\alpha_3}\mid \alpha_1,
\alpha_2, \alpha_3\in \mathbb{N}\}$ is a basis of $\mathcal{H}(f)$
then $H(f)$ is a left free $R$-module with basis the set of standard
monomials $\{y^{\alpha} \mid \alpha\in \mathbb{N}\}$. Since
$yt-fy=0\in R$, with $f\neq 0$, and $yx-xy=f-t\in R$, then
condition (\ref{eq.coef}) of Definition \ref{def.skewpbwextensions}
is satisfied. Therefore $\mathcal{H}(f)= \sigma(R)\langle y\rangle$.
\end{proof}

Note that a generalized Heisenberg algebra $\mathcal{H}(f)$ is a graded algebra with $x$, $y$ and $t$ in degree $1$ if and only if $f=t$, in which case $\mathcal{H}(f)= K[t,x,y]$ (see \cite{Lu2015}). Thus, $\mathcal{H}(f)$ is graded (skew) Calabi--Yau or Koszul or Artin--Schelter regular if and only if $f=t$.

The next result motivated the introduction, in the next section, of a broader class which includes both the generalized Heisenberg algebras and the two-parameter quantum Heisenberg algebras, including the enveloping algebra of the Heisenberg Lie algebra itself.

\begin{theorem}\label{prop.qtwoHGenH}
A Gaddis two-parameter quantum Heisenberg algebra $G\mathcal{H}_{p,q}$ is isomorphic to a generalized Heisenberg algebra if and only if either $p=1$ or $q=1$, but not both. In particular, the enveloping algebra $G\mathcal{H}_{1,1}$ of the Heisenberg Lie algebra is not a generalized Heisenberg algebra.
\end{theorem}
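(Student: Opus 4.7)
My plan is to handle the two directions separately.

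For the \emph{if} direction, when $q=1$ and $p\neq 1$, I would define a map $\Phi\colon G\mathcal{H}_{p,1}\to\mathcal{H}(p^{-1}t)$ on generators by $X\mapsto x$, $Y\mapsto y$, $T\mapsto (p^{-1}-1)t$, and verify the three defining relations by a short direct computation: the key point is that $yx-xy=(p^{-1}-1)t$ holds in $\mathcal{H}(p^{-1}t)$. Since $p^{-1}-1\in K^*$, $\Phi$ is invertible. The remaining case $p=1$, $q\neq 1$ then follows by composing with the isomorphism $G\mathcal{H}_{1,q}\cong G\mathcal{H}_{q,1}$ recalled after Corollary~\ref{cor.GTwoHKosz} from \cite[Proposition~3.14]{Gaddis2016}.

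For the \emph{only if} direction, suppose $G\mathcal{H}_{p,q}\cong\mathcal{H}(f)$. Since $G\mathcal{H}_{p,q}$ is noetherian (being an iterated Ore extension of $K[t]$), so is $\mathcal{H}(f)$, and Theorem~\ref{lema.GenHeis} forces $f=at+b$ with $a\in K^*$. My strategy is to reduce $\mathcal{H}(at+b)$ to a short list of normal forms via automorphisms of $K[t]$. When $a\neq 1$, the affine change of generator $t\mapsto t - b/(a-1)$ yields $\mathcal{H}(at+b)\cong\mathcal{H}(at)$, and the isomorphism from the ``if'' direction, read with $p=a^{-1}$, gives $\mathcal{H}(at)\cong G\mathcal{H}_{a^{-1},1}$. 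When $a=1$ and $b=0$, one has $\mathcal{H}(t)=K[x,y,t]$. When $a=1$ and $b\neq 0$, a rescaling of the three generators gives $\mathcal{H}(t+b)\cong\mathcal{H}(t+1)$. So any noetherian $\mathcal{H}(f)$ is isomorphic to one of three models: the polynomial ring $K[x,y,t]$, a Gaddis algebra $G\mathcal{H}_{a^{-1},1}$ with $a\in K^*\setminus\{1\}$, or the algebra $\mathcal{H}(t+1)$.

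It remains to eliminate the first and third models as candidates for $G\mathcal{H}_{p,q}$. The polynomial case is immediate, since $G\mathcal{H}_{p,q}$ is noncommutative for every $(p,q)$. The main obstacle is ruling out $G\mathcal{H}_{p,q}\cong\mathcal{H}(t+1)$, and for this I plan to use the abelianization $A/[A,A]$ as the distinguishing invariant: the relation $yx-xy=1$ in $\mathcal{H}(t+1)$ forces $1=0$ in the commutative quotient, so the abelianization vanishes, whereas a direct computation in $G\mathcal{H}_{p,q}$ shows its abelianization is always nonzero --- it collapses to $K[x,y]$ when $p=1$ or $q=1$, and to $K[x,y]/(x^2y,xy^2)$ when $p,q\neq 1$. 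Consequently, $G\mathcal{H}_{p,q}\cong\mathcal{H}(f)$ forces the middle case $\mathcal{H}(f)\cong G\mathcal{H}_{a^{-1},1}$ for some $a\neq 1$, and then \cite[Proposition~3.14]{Gaddis2016} places $(p,q)$ in the orbit $\{(a^{-1},1),(1,a^{-1}),(a,1),(1,a)\}$, making exactly one of $p,q$ equal to $1$ and in particular excluding $(p,q)=(1,1)$.
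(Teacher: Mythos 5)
Your argument is correct and reaches the same conclusion, but it differs from the paper's proof in two places, both worth noting. In the converse direction you realize $G\mathcal{H}_{p,1}\cong\mathcal{H}(p^{-1}t)$ by the mere rescaling $T\mapsto(p^{-1}-1)t$, which works because in $G\mathcal{H}_{p,1}$ the element $T$ is the commutator $[Y,X]$ and so matches $f-t$ up to a nonzero scalar; the paper instead treats $G\mathcal{H}_{1,q}$ directly and is forced into the nonlinear substitution $T\mapsto t-xy$ (since $T$ is central in $G\mathcal{H}_{1,q}$ while $t$ is not central in $\mathcal{H}(qt)$). Both proofs then transfer to the remaining case via Gaddis's swap $G\mathcal{H}_{p,q}\cong G\mathcal{H}_{q,p}$. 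More substantially, in the forward direction your endgame is different: after reducing a noetherian $\mathcal{H}(f)$ to the three models $K[x,y,t]$, $\mathcal{H}(t+1)$, and $G\mathcal{H}_{a^{-1},1}$ with $a\neq1$, and eliminating the first two by (non)commutativity and the vanishing of the abelianization (the latter is the same invariant the paper uses, phrased as ``no nonzero commutative epimorphic image''), you apply Gaddis's isomorphism classification \cite[Proposition 3.14]{Gaddis2016} to the composite isomorphism $G\mathcal{H}_{p,q}\cong G\mathcal{H}_{a^{-1},1}$, which pins $(p,q)$ to the orbit $\{(a^{-1},1),(1,a^{-1}),(a,1),(1,a)\}$ in one stroke. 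The paper argues this step by hand, splitting into $p=1$ (where centrality of $t$ yields the contradiction) and $p\neq1$ (where it compares the varieties of one-dimensional representations, $\mathbb{A}^2$ versus the reducible curve $\alpha\beta=0$). Your route is shorter but leans on the full strength of the cited classification, whereas the paper's subcase analysis is self-contained apart from elementary invariants. Two small points to tighten: the noncommutativity of $G\mathcal{H}_{p,q}$ and the nonvanishing of its abelianization should be justified, e.g.\ by the PBW basis of Proposition~\ref{prop.twoqHisSkew} or by the surjection $G\mathcal{H}_{p,q}\to K[z]$ killing $x$ and $t$; and the invertibility of your $\Phi$ requires observing that the inverse assignment respects the relations of $\mathcal{H}(p^{-1}t)$, which is the same computation read backwards.
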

\begin{proof}
Suppose that $\mathcal{H}(f)$ is isomorphic to $G\mathcal{H}_{p,q}$, for some $f\in K[t]$ and $p,q\in K^*$. By Corollary~\ref{cor.twoqisnoeth}, $G\mathcal{H}_{p,q}$ is noetherian, so the same holds for $\mathcal{H}(f)$. By~\cite[Proposition 2.1]{Lopes2017}, it follows that $\deg f=1$. So set $f(t)=\lambda t+\mu$, with $\lambda, \mu\in K$ and $\lambda\neq 0$.

\underline{Case 1: $\lambda=1$.} If $\mu=0$ then $f(t)=t$ and $\mathcal{H}(f)$ is the commutative polynomial ring $K[t, x, y]$. Thus, $G\mathcal{H}_{p,q}$ is commutative, whence $0=[y,x]=(q-1)xy+t$. But Proposition~\ref{prop.twoqHisSkew} implies that $\{t^{i}x^{j}y^{k}\mid i,j,k\in \mathbb{N}\}$ is a $K$-basis of $G\mathcal{H}_{p,q}$, which contradicts the previous relation. Therefore, $G\mathcal{H}_{p,q}$ is never commutative and $\mu\neq 0$.

Suppose that $I$ is an ideal of $\mathcal{H}(t+\mu)$ such that $\mathcal{H}(t+\mu)/I$ is commutative. Then $1+I=\mu^{-1}[y,x]+I=\mu^{-1}[y+I,x+I]=0+I$, so $1\in I$. It follows that $\mathcal{H}(t+\mu)$ has no nonzero commutative epimorphic images. On the other hand, there is an algebra homomorphism $\phi:G\mathcal{H}_{p,q}\longrightarrow K[z]$ such that $\phi(x)=\phi(t)=0$ and $\phi(y)=z$, and $\phi$ is surjective. So $G\mathcal{H}_{p,q}/\ker\phi\cong K[z]$ has nonzero commutative epimorphic images, which contradicts the hypothesis that $\mathcal{H}(f)\cong G\mathcal{H}_{p,q}$.

\underline{Case 2: $\lambda\neq1$.} Then there is an isomorphism $\psi:\mathcal{H}(\lambda t)\longrightarrow \mathcal{H}(\lambda t+\mu)$ defined on the generators by $\psi(t)=t+\frac{\mu}{\lambda-1}$, $\psi(x)=x$, $\psi(y)=y$. Indeed, computing in $\mathcal{H}(\lambda t+\mu)$,

\begin{align*}
\psi(tx-\lambda xt)&= \left(t+\frac{\mu}{\lambda-1}\right)x-\lambda x\left(t+\frac{\mu}{\lambda-1}\right)\\
&=tx-\lambda xt + (\lambda-1)^{-1}(\mu-\lambda\mu)x=\mu x-\mu x=0;\\[5pt]
\psi(yx-xy-(\lambda-1)t)&=yx-xy-(\lambda-1)\left(t+\frac{\mu}{\lambda-1}\right)\\
&=(\lambda-1)t+\mu -(\lambda-1)t-\mu=0;
\end{align*}
and similarly $\psi(yt-\lambda ty)=0$. The inverse isomorphism sends $x\in \mathcal{H}(\lambda t+\mu)$ to $x\in \mathcal{H}(\lambda t)$, $y\in \mathcal{H}(\lambda t+\mu)$ to $y\in \mathcal{H}(\lambda t)$ and $t\in \mathcal{H}(\lambda t+\mu)$ to $t-\frac{\mu}{\lambda-1} \in \mathcal{H}(\lambda t)$. Thus, without loss of generality, we can assume that $\mu=0$. So the defining relations of $\mathcal{H}(f)$ are
\begin{equation*}
  tx=\lambda xt,\quad yt=\lambda ty,\quad yx-xy = (\lambda-1) t.
\end{equation*}

Let $I$ be the minimal ideal of $\mathcal{H}(\lambda t)$ such that $\mathcal{H}(\lambda t)/I$ is commutative. Then, since $\lambda\neq 1$, $I$ is generated by the normal element $t$: $I=t\mathcal{H}(\lambda t)=\mathcal{H}(\lambda t)t$.

\underline{Subcase 2A: $p=1$.} Then $t\in G\mathcal{H}_{1,q}$ is central. The minimal ideal $J$ such that $G\mathcal{H}_{1,q}/J$ is commutative is generated by $[y,x]=(q-1)xy+t$.

Suppose, by contradiction, that $q=1$. Then $[y,x]=t$ is central and $J=tG\mathcal{H}_{1,q}$. If $\phi: G\mathcal{H}_{1,q}\longrightarrow \mathcal{H}(\lambda t)$ is an isomorphism, then $\phi(J)=I$. As the only units in $\mathcal{H}(f)$ are the nonzero scalars, this forces $\phi(t)=\gamma t$, for some $\gamma\in K^*$. But this is a contradiction as $t\in G\mathcal{H}_{1,q}$ is central yet $t\in \mathcal{H}(\lambda t)$ is not, because $\lambda\neq 1$. This forces $q\neq 1$, as needed.

\underline{Subcase 2B: $p\neq1$.} Suppose, by contradiction, that $q\neq1$.

The space of maximal ideals of codimension $1$ of $\mathcal{H}(\lambda t)$ (equivalently, as a set, the $1$-dimensional representations of $\mathcal{H}(\lambda t)$) is given by
\begin{equation*}
\{ (x-\alpha, y-\beta, t-\gamma)\mid \alpha, \beta, \gamma\in K,\ \alpha\gamma=\beta\gamma=\gamma=0\}=
\{ (x-\alpha, y-\beta, t)\mid \alpha, \beta\in K\}\cong\mathbb{A}^2,
\end{equation*}
an irreducible algebraic variety of dimension $2$.

On the other hand, assuming that $p,q\neq 1$, the space of maximal ideals of codimension $1$ of $G\mathcal{H}_{p,q}$ is given by
\begin{align*}
\{ (x-\alpha, y-\beta, t-\gamma)\mid \alpha, \beta, \gamma\in K, &\ \alpha\gamma=\beta\gamma=0, \ (1-q)\alpha\beta=\gamma\}=\\
&=\{ (x-\alpha, y-\beta, t)\mid \alpha, \beta\in K,\ \alpha\beta=0\},
\end{align*}
a reducible algebraic variety of dimension $1$.

This contradiction shows that if $p\neq1$ then $q=1$, as stated. This concludes the direct implication.

For the converse implication, and to avoid confusion between the generators of each one of these algebras, we denote the defining generators of $\mathcal{H}(f)$ by $x, y, t$ and the defining generators of $G\mathcal{H}_{p,q}$ by $X, Y, T$.

We assume first that $p=1$ and $q\neq 1$. So we have
\begin{equation*}
YX-qXY=T, \quad XT = TX, \quad YT=TY.
\end{equation*}
We claim that there is an isomorphism $\phi: G\mathcal{H}_{1,q}\longrightarrow \mathcal{H}(q t)$ such that $\phi(X)=\frac{x}{q-1}$, $\phi(Y)=y$ and $\phi(T)=t-xy$. Indeed we have
\begin{align*}
\phi(YX-qXY-T)&=(q-1)^{-1}(y x-qxy)-t+xy\\
&=(q-1)^{-1}((1-q)xy+(q-1)t)-t+xy=0;\\[5pt]
\phi(XT - TX)&=(q-1)^{-1}(x(t-xy)-(t-xy)x)\\&=(q-1)^{-1}x((1-q)t-xy+yx)=0;
\end{align*}
and similarly $\phi(YT - TY)=0$. The inverse map takes $x$ to $(q-1)X$, $y$ to $Y$ and $t$ to $T+(q-1)XY$.

Now, if $p\neq 1$ and $q=1$, we use \cite[Proposition 3.14]{Gaddis2016} and the previous result to obtain
$G\mathcal{H}_{p,1}\cong G\mathcal{H}_{1,p}\cong \mathcal{H}(p t)$.
\end{proof}

\begin{corollary}\label{cor.disyGenHqHandH}
Up to isomorphism, the classes of generalized Heisenberg algebras $\mathcal{H}(f)$ and quantum Heisenberg algebras $\mathcal{H}_{q}$ are disjoint.
\end{corollary}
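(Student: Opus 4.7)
The plan is to derive this corollary as an immediate specialization of Theorem~\ref{prop.qtwoHGenH}. The key observation, already recorded in the paragraph following the definition of $G\mathcal{H}_{p,q}$, is that the quantum Heisenberg algebra sits on the diagonal of the Gaddis two-parameter family: comparing relations (\ref{eq.quanHeis}) and (\ref{eq.quantwoHeis}) with $p=q$, one reads off $\mathcal{H}_q = G\mathcal{H}_{q,q}$ for every $q\in K^*$.

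With this identification in hand, I would then invoke the criterion supplied by Theorem~\ref{prop.qtwoHGenH}: the algebra $G\mathcal{H}_{p,q}$ is isomorphic to some $\mathcal{H}(f)$ if and only if exactly one of $p$ and $q$ equals $1$. Setting $p=q$, this reduces to the requirement that exactly one of $q,q$ be $1$, a condition that never holds: if $q=1$ then both parameters equal $1$, while if $q\ne 1$ then neither does. Consequently no $q\in K^*$ can make $\mathcal{H}_q \cong \mathcal{H}(f)$ for any $f\in K[t]$, which is precisely the assertion of the corollary.

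There is essentially no obstacle here, since all of the substantive work has already been carried out in the proof of Theorem~\ref{prop.qtwoHGenH}: the noetherianity reduction forcing $\deg f=1$, the commutativity argument excluding $\lambda=1$ with $\mu\ne 0$, and the dimension-of-representation-variety argument dealing with $p,q\ne 1$. The corollary merely extracts the diagonal slice of that classification, and its proof amounts to this one logical observation.
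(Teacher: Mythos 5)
Your proposal is correct and follows exactly the paper's own route: identify $\mathcal{H}_q$ with the diagonal case $G\mathcal{H}_{q,q}$ and apply Theorem~\ref{prop.qtwoHGenH}, whose ``exactly one of $p,q$ equals $1$'' criterion can never be met when $p=q$. The paper's proof is just a terser statement of the same one-line observation.
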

\begin{proof}
The quantum Heisenberg algebra $\mathcal{H}_{q}$ is the Gaddis two-parameter quantum Heisenberg algebra $G\mathcal{H}_{q,q}$, and the result follows from Theorem~\ref{prop.qtwoHGenH}.
\end{proof}

\section{Quantum generalized Heisenberg algebras}\label{section-QuanVsSkew}

In \cite{Lopes2022Q}, Lopes and Razavinia introduced the quantum
generalized Heisenberg algebras which depend on a parameter $q$ and
two polynomials $f, g\in K[t]$. The class of quantum generalized
Heisenberg algebras includes generalized down-up algebras, the
Heisenberg algebra $\mathcal{H}$, quantum Heisenberg algebras
$\mathcal{H}(q)$,  Gaddis two-parameter quantum Heisenberg algebras
$G\mathcal{H}_{p,q}$, Hayashi two-parameter quantum Heisenberg algebras
$H\mathcal{H}_{p,q}$, the deformations of the enveloping algebra of $\mathfrak{sl}_2$ introduced by Smith in~\cite{spS90}, those introduced by Jing and Zhang in~\cite{JZ95}, and many others.

\begin{definition}[\cite{Lopes2022Q}, Definition 1.1]\label{Def.QGHal} Let $K$ be an arbitrary field and fix $q\in K$ and $f,g\in
K[t]$.
The \emph{quantum generalized Heisenberg algebra}, denoted by $\mathcal{H}_q(f, g)$, is the algebra generated
by $x, y$ and $t$, with defining relations
\begin{equation}\label{eq.QgenHeis}
tx=xf, \quad yt=fy, \quad yx-qxy=g.
\end{equation}
\end{definition}

\begin{remark}\label{rem.relQGHeis}
\begin{enumerate}
\item[\rm (i)] Any generalized Heisenberg algebra $\mathcal{H}(f)$  is  a quantum generalized Heisenberg algebra $\mathcal{H}_q(f, g)$, by setting  $q=1$ and $g= f-t$, i.e.
  $\mathcal{H}(f) = \mathcal{H}_1(f, f - t)$.
\item[\rm (ii)] A Heisenberg algebra $\mathcal{H}$ is  a quantum generalized Heisenberg $\mathcal{H}_q(f, g)$, setting $f=g=t$ and $q=1$.
\item[\rm (iii)] A quantum Heisenberg algebra $\mathcal{H}_q$ is  a quantum generalized Heisenberg $\mathcal{H}_q(f, g)$, setting $f=q^{-1}t$ and $g=t$.
\item[\rm (iv)] A Gaddis two-parameter quantum Heisenberg algebra $G\mathcal{H}_{p,q}$  is a quantum generalized Heisenberg algebra $\mathcal{H}_q(f, g)$, by setting $f=p^{-1}t$, $g=t$ and $q=p$.
\item[\rm (iv)] A Hayashi two-parameter quantum Heisenberg algebra $H\mathcal{H}_{p,q}$  is a quantum generalized Heisenberg algebra $\mathcal{H}_q(f, g)$, by setting $f=p^{-1}t$, $g=t^2$ and $q=p$.
\end{enumerate}
\end{remark}

Cassidy and Shelton in \cite{Cassidy2004} introduced the
\emph{generalized down-up algebra} $L(g, p_1, p_2, p_3)$ as the
unital associative algebra generated by $d, u$ and $t$ with defining
relations
\begin{equation}\label{eq.gendown} dt= p_1td - p_3d,\quad
tu = p_1ut - p_3u, \quad du - p_2ud+g = 0,
\end{equation}
where $p_1, p_2, p_3\in K$ and $g\in K[t]$. The
generalized down-up algebra $L(g, p_1, p_2, p_3)$ is isomorphic to
the quantum generalized Heisenberg algebra
$\mathcal{H}_{p_2}(p_1t-p_3,-g)$ and conversely, any quantum
generalized Heisenberg algebra $\mathcal{H}_q(f, g)$ such that $f =
at+b$, with $a, b\in K$, is a generalized down-up algebra of the
form $L(-g, a, q,-b)$ (see \cite[Proposition 1.3 ]{Lopes2022Q}).

 \begin{proposition}\label{prop.qgenifflin} The quantum generalized Heisenberg
 algebra $\mathcal{H}_q(f, g)$ is a bijective skew PBW extension of $K[t]$  if and only if
 $\deg f=1$ and $q\neq 0$.
 \end{proposition}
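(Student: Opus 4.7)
The plan is to treat the two implications separately. For the forward direction, assume $\mathcal{H}_q(f,g)$ is a bijective skew PBW extension of $R=K[t]$ in the variables $x,y$. By Proposition~\ref{prop.endoderiv}, there exist an injective endomorphism $\sigma_y:K[t]\to K[t]$ and a $\sigma_y$-derivation $\delta_y$ such that $yr=\sigma_y(r)y+\delta_y(r)$ for every $r\in K[t]$. Specializing to $r=t$ and comparing with the defining relation $yt=fy$, both expressions must agree; reading off coefficients on the basis monomials $1$ and $y$ yields $\sigma_y(t)=f$ and $\delta_y(t)=0$. Bijectivity forces $\sigma_y$ to be an algebra automorphism of $K[t]$, and since every $K$-algebra automorphism of $K[t]$ sends $t$ to a degree-one polynomial, it follows that $\deg f=1$. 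Similarly, Definition~\ref{def.skewpbwextensions}(iv) applied to the relation $yx-qxy=g\in K[t]$ forces $d_{x,y}=q$ (by reading off the coefficient of the basis monomial $xy$), and the requirement that $d_{x,y}$ be invertible in $K[t]$ gives $q\in K^*$.

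For the converse, assume $\deg f=1$ and $q\neq 0$, and write $f=at+b$ with $a\in K^*$. The plan is to realize $\mathcal{H}_q(f,g)$ as the iterated Ore extension $B:=K[t][x;\sigma_x][y;\sigma_y',\delta_y']$, where $\sigma_x$ is the $K$-algebra automorphism of $K[t]$ with $\sigma_x(t)=a^{-1}(t-b)$, and where $\sigma_y',\delta_y'$ are prescribed on the generators $t,x$ of $B_1:=K[t][x;\sigma_x]$ by $\sigma_y'(t)=f$, $\sigma_y'(x)=qx$, $\delta_y'(t)=0$, $\delta_y'(x)=g$. One first checks that the relation $tx=xf$ holds in $B_1$; the main technical step is then to verify that $(\sigma_y',\delta_y')$ extend consistently to $B_1$ by showing $\sigma_y'(xt)=\sigma_y'(\sigma_x(t)x)$ and $\delta_y'(xt)=\delta_y'(\sigma_x(t)x)$. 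Both reduce to short computations using the identity $h(t)x=xh(f(t))$ for $h\in K[t]$, which follows iteratively from $tx=xf$, together with the commutativity of $K[t]$.

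Once $B$ is established, the defining relations of $\mathcal{H}_q(f,g)$ hold in $B$, giving a surjective algebra homomorphism $\pi:\mathcal{H}_q(f,g)\to B$. Using those same relations to reduce arbitrary words in $t,x,y$ shows that the monomials $\{t^ix^jy^k\}$ span $\mathcal{H}_q(f,g)$; since $\pi$ carries them bijectively onto the PBW basis of $B$, it is an isomorphism and $\{x^jy^k\}$ is a left $K[t]$-basis of $\mathcal{H}_q(f,g)$, giving item (ii) of Definition~\ref{def.skewpbwextensions}. Items (iii) and (iv) follow immediately from the Ore-type relations $xr=\sigma_x(r)x$, $yr=\sigma_y'(r)y+\delta_y'(r)$, and $yx-qxy=g\in K[t]$. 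Bijectivity holds because both $\sigma_x$ and $\sigma_y'|_{K[t]}$ are automorphisms of $K[t]$ (each sends $t$ to a degree-one polynomial) and $d_{x,y}=q\in K^*$ is a unit. The only delicate step throughout is the compatibility check of $(\sigma_y',\delta_y')$ with the Ore relation in $B_1$; everything else is routine bookkeeping with the defining relations.
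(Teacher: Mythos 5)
Your proof is correct, and your converse direction follows essentially the same route as the paper: realize $\mathcal{H}_q(f,g)$ with $\deg f=1$ and $q\neq 0$ as the iterated Ore extension $K[t][x;\sigma_x][y;\sigma_y,\delta_y]$, deduce that $\{x^jy^k\}$ is a left $K[t]$-basis, and read off conditions (\ref{eq.coef}) and (\ref{eq.variab}) of Definition~\ref{def.skewpbwextensions} from the relations; you merely carry out by hand the compatibility checks for $(\sigma_y',\delta_y')$ that the paper outsources to \cite{Lopes2017} via the proof of Theorem~\ref{lema.GenHeis}, to which its one-line proof defers. The forward direction is where you genuinely diverge. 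The paper's argument (transplanted from Theorem~\ref{lema.GenHeis}) runs through noetherianity: a bijective skew PBW extension of the noetherian ring $K[t]$ is noetherian, and noetherianity of $\mathcal{H}_q(f,g)$ forces $\deg f=1$ and $q\neq 0$ by the cited results of Lopes and Lopes--Razavinia. You instead argue structurally: bijectivity of $\sigma_y$ together with $yt=fy$ forces $\sigma_y(t)=f$ to be the image of $t$ under an automorphism of $K[t]$, hence $\deg f=1$, and condition (\ref{eq.variab}) applied to $yx-qxy=g$ forces $d_{x,y}=q\in K^{*}$. Your version is more self-contained (no appeal to the external noetherianity characterization) and is in the same spirit as the paper's own proof of Proposition~\ref{prop.qgenskewK}; it also pinpoints exactly which axiom fails when $\deg f\neq 1$ or $q=0$. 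What the paper's route buys that yours does not: the noetherianity argument is insensitive to the choice of PBW generators, whereas your forward direction tacitly assumes the skew PBW variables are the defining generators $x,y$, so that Proposition~\ref{prop.endoderiv} applies to them and $yt=fy$ can be compared coefficientwise against $\sigma_y(t)y+\delta_y(t)$. That reading is consistent with the conventions used throughout the paper, so I would not call it a gap, but it is worth flagging as the one point where your argument is formally narrower.
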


 \begin{proof}
The proof is analogous to the proof of Theorem~\ref{lema.GenHeis}, since $g\in K[t]$.
\end{proof}

\begin{proposition}\label{prop.qgenskewK}
A quantum generalized Heisenberg algebra $\mathcal{H}_q(f, g)$ is a
skew PBW extension of $K$ if and only if $q\neq 0$, $\deg f=1$ and
$\deg g\leq 1$.
\end{proposition}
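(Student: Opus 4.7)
The plan is to adapt the argument of Theorem~\ref{lema.GenHeis}, incorporating the new constraints introduced by the extra quantum parameter $q$ and the independent polynomial $g$ (in the generalized Heisenberg case, one has $q=1$ and $g=f-t$, so $\deg g\leq 1$ was automatic once $\deg f=1$).

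For the forward direction, I would suppose $\mathcal{H}_q(f,g)$ is a skew PBW extension of $K$ in the generators $t,x,y$ and apply condition~(\ref{eq.variab}) to the three pairs. From the relation $yx=qxy+g$ one obtains $yx-cxy=(q-c)xy+g$ for any $c\in K^{*}$; since $xy$ is an independent standard monomial not contained in $K+Kt+Kx+Ky$, necessarily $c=q$, forcing $q\neq 0$, and then $g$ must lie in $(K+Kt+Kx+Ky)\cap K[t]=K+Kt$, whence $\deg g\leq 1$. Analogously, using $tx=xf$ with $f=\sum_k\lambda_kt^k$, the monomials $xt^k$ for $k\geq 2$ that appear in $xf$ cannot be absorbed into $K+Kt+Kx+Ky$ (in the appropriate ordering they are linearly independent standard monomials outside that subspace), and the requirement that the structure constant linking $t$ and $x$ be nonzero then forces $\deg f=1$.

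For the backward direction, assume $q\neq 0$, $\deg f=1$, $\deg g\leq 1$, and write $f(t)=\lambda t+\mu$ with $\lambda\neq 0$ and $g(t)=\gamma t+\delta$. Taking $R=K$ and variables $x_{1}=t,x_{2}=x,x_{3}=y$, condition (i) of Definition~\ref{def.skewpbwextensions} is trivial; condition (ii), that $\{t^{a}x^{b}y^{c}\}$ is a $K$-basis of $\mathcal{H}_q(f,g)$, follows from its identification with a generalized down-up algebra via~\cite[Proposition 1.3]{Lopes2022Q}; and condition (iii) is immediate since $K$ is central. For condition (iv), the defining relations yield
\begin{align*}
xt-\lambda^{-1}tx=-\lambda^{-1}\mu x,\quad yt-\lambda ty=\mu y,\quad yx-qxy=\gamma t+\delta,
\end{align*}
all lying in $K+Kt+Kx+Ky$, with nonzero structure constants $d_{1,2}=\lambda^{-1}$, $d_{1,3}=\lambda$, $d_{2,3}=q$. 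The main subtlety is in the forward direction: one must invoke the linear independence of the standard monomials of $\mathcal{H}_q(f,g)$ to rule out $\deg f\geq 2$, in parallel with the final paragraph of the proof of Theorem~\ref{lema.GenHeis}.
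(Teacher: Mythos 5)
Your proof is correct and follows essentially the same route as the paper's: both directions are checked directly against Definition~\ref{def.skewpbwextensions}, using the linear independence of the standard monomials $x^it^jy^k$ from \cite[Lemma 2.1]{Lopes2022Q} to force $d_{x,y}=q\neq 0$, $\deg g\leq 1$ and $\deg f=1$, and writing out the three rewritten relations for the converse. The only cosmetic differences are that the paper extracts $\deg f=1$ from the relation $yt=fy$ rather than from $tx=xf$, and handles $q=0$ as a separate preliminary case instead of folding it into the identity $d_{x,y}=q$.
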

\begin{proof}
Let $f=pt+e$ and $g=ct+d$  with $e,c,d\in K$, $p,q\in K^*$. Let's see that
$\mathcal{H}_q(f, g)$ is a skew PBW extension of $K$
in the variables $x,t,y$. Using \cite[Lemma 2.1]{Lopes2022Q} we have
that $\mathcal{H}_q(f, g)$ is a left free $K$-module, with basis
the set $\{x^it^jy^k \mid i, j, k \in \mathbb{N}\}$. As
$\mathcal{H}_q(f, g)$ is a $K$-algebra, condition
(\ref{eq.coef}) of Definition \ref{def.skewpbwextensions} is immediate.
From (\ref{eq.QgenHeis}) we have that $xt-p^{-1}tx=-p^{-1}ex$,
$yt-pty=ey$ and $yx-qxy = ct+d$, which correspond to the conditions (\ref{eq.variab})
of Definition \ref{def.skewpbwextensions} for  the
variables $x,t,y$. Then $\mathcal{H}_q(f, g)$ is a skew PBW extension of $K$ in the variables $t,x,y$. Note that, in addition,
$\mathcal{H}_q(f, g)$ is bijective.

Suppose now that $\mathcal{H}_q(f, g)$ is a skew PBW extension of $K$. If $q=0$ then from (\ref{eq.QgenHeis}) and (\ref{eq.coef}) we have that  $yx=g=rxy+s$ for some $r\in K^*$ and $s\in K+Kt+Kx + Ky$, which is a contradiction by~\cite[Lemma 2.1]{Lopes2022Q}, so $q\neq 0$.

From (\ref{eq.variab}) there is some $d_{t,y}\in K^*$ such that $yt-d_{t,y}ty\in K+Kt+Kx + Ky$. But by (\ref{eq.QgenHeis}) we have $yt-d_{t,y}ty=(f(t)-d_{t,y}t)y$, which is in $K+Kt+Kx + Ky$ if and only if $f(t)-d_{t,y}t\in K$, again by~\cite[Lemma 2.1]{Lopes2022Q}. Thence, $\deg f=1$.

Similarly, from (\ref{eq.variab}) there is some $d_{x,y}\in K^*$ such that $yx-d_{x,y}xy\in K+Kt+Kx + Ky$. By (\ref{eq.QgenHeis}), $yx-d_{x,y}xy=(q-d_{x,y})xy+g(t)$ and~\cite[Lemma 2.1]{Lopes2022Q} implies that $d_{x,y}=q$ and $\deg g\leq1$.
\end{proof}

The first-named author and Razavinia in \cite[Theorem 4.2]{Lopes2022} gave necessary and sufficient conditions for the isomorphism $\mathcal{H}_q(f, g)\cong \mathcal{H}_q(f', g')$. They also proved  that \[\mathcal{H}_q(f, g)\cong\mathcal{H}_q(f(t-\alpha)+\alpha, g(t-\alpha)),\] for any $\alpha\in K$. Let $f_1=pt$, $f=pt+k$ with $1\neq p\in K^*$ (see in \cite[Lemma 2.1]{Lopes2022}). Then
\begin{align*}
 f_1(t-k(1-p)^{-1})+k(1-p)^{-1}&=p(t-k(1-p)^{-1})+k(1-p)^{-1}\\&=pt-pk(1-p)^{-1}+k(1-p)^{-1}=pt+k=f.
\end{align*}
Then by \cite[Lemma 2.1]{Lopes2022} we have that $\mathcal{H}_q(f_1, g_1)\cong \mathcal{H}_q(f, g)$, i.e., $$\mathcal{H}_q(pt, g(t))\cong \mathcal{H}_q(pt+k, g(t-k(1-p)^{-1})),$$ for $g(t)\in K[t]$ and $1\neq p\in K^*$.\\ 

From Proposition \ref{prop.qgenifflin}  and relations (\ref{eq.QgenHeis}) the
following holds.

\begin{corollary}\label{cor.relGenqHgradPBW} Let  $\mathcal{H}_q(f, g)$ be a quantum generalized Heisenberg algebra with  $f=pt$ and $p, q\in K^*$.
\begin{enumerate}
\item[\rm (i)] If $g=0$ then $\mathcal{H}_q(f, g)=K\langle t,x,y\rangle/\langle xt-p^{-1}tx, yt-pty,yx-qxy\rangle$ is a quantum polynomial algebra and a graded skew PBW extension of $K$ or $K[t]$.
\item[\rm (ii)] If $g$ is a homogeneous polynomial of degree 2, then $\mathcal{H}_q(f, g)=K\langle t,x,y\rangle/\langle xt-p^{-1}tx, yt-pty,yx-qxy-g\rangle$ is a connected graded skew PBW extension of $K[t]$.
\end{enumerate}
 \end{corollary}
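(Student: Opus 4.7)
The plan is to reduce the statement to a verification of conditions (i) and (ii) of Proposition \ref{prop.grad A} for an appropriate choice of coefficient ring $R$ in each case, thereby applying Definition \ref{def. graded skew PBW ext}. Since $\deg f=1$ and $q\in K^*$, the bijective skew PBW structure of $\mathcal{H}_q(pt,g)$ over $K[t]$ is already supplied by Proposition \ref{prop.qgenifflin}, and the one over $K$ (when $\deg g\le 1$) by Proposition \ref{prop.qgenskewK}. The natural grading one takes throughout assigns $\deg t=\deg x=\deg y=1$.

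First I would identify, via Proposition \ref{prop.endoderiv} and the first two relations in (\ref{eq.QgenHeis}), the endomorphisms and derivations associated with $x$ and $y$: namely $\sigma_x(t)=p^{-1}t$, $\sigma_y(t)=pt$, $\delta_x=\delta_y=0$. With the standard grading on $R=K[t]$, the $\sigma_i$ are graded algebra automorphisms and the zero derivations qualify trivially as graded $\sigma_i$-derivations $R(-1)\to R$, so condition (i) of Proposition \ref{prop.grad A} is immediate. The only remaining check is condition (ii): that $yx-qxy\in R_2+R_1x+R_1y$ with $q\in R_0$.

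For part (i), the hypothesis $g=0$ makes this check trivial, as $yx-qxy=0$ and $q\in K=R_0$. Hence $\mathcal{H}_q(pt,0)$ is a graded skew PBW extension of $K[t]$. Moreover, the three defining relations $tx=pxt$, $yt=pty$, $yx=qxy$ express that every pair of generators $q$-commutes, so the algebra is indeed a quantum polynomial algebra. The same verification carried out with $R=K$ in the variables $t,x,y$ (where each quadratic relation in (\ref{eq.variab}) is visibly homogeneous of degree $2$ once $t$ is placed in degree $1$) then yields the graded skew PBW structure over $K$ as well.

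For part (ii), I would observe that $g$ homogeneous of degree $2$ forces $g\in K[t]_2=R_2$, whence $yx-qxy=g\in R_2\subseteq R_2+R_1x+R_1y$ and $q\in R_0$, again verifying condition (ii). Connectedness is then inherited from $R=K[t]$: under the grading of Proposition \ref{prop.grad A} one has $\bigl(\mathcal{H}_q(pt,g)\bigr)_0=R_0=K$. There is no substantive obstacle here; the argument is essentially bookkeeping. The only delicate point lies in part (ii), where the degree-$2$ homogeneity of $g$ is precisely what keeps the relation $yx-qxy=g$ quadratic under the chosen grading, exactly as demanded by Proposition \ref{prop.grad A}(ii).
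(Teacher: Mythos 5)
Your verification is correct and follows essentially the same route as the paper, which dispatches this corollary with the single remark that it follows from Proposition~\ref{prop.qgenifflin} and the relations~(\ref{eq.QgenHeis}); you have simply made explicit the check of conditions (i) and (ii) of Proposition~\ref{prop.grad A} under the grading $\deg t=\deg x=\deg y=1$. The computations of $\sigma_x$, $\sigma_y$, the vanishing derivations, and the placement of $g\in R_2$ are all accurate, so no further comment is needed.
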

\begin{corollary}\label{cor.qgenKoszul} Let $\mathcal{H}_q(f, g)$ be quantum generalized Heisenberg
 algebra with $f=pt$, $p,q\in K^*$ and $g$ a homogeneous polynomial of degree 2. Then $\mathcal{H}_q(f, g)$ is Koszul,  Artin--Schelter regular and graded skew Calabi--Yau of dimension 3.
 \end{corollary}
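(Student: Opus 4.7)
The plan is to reduce everything to preservation theorems for graded skew PBW extensions, using Corollary~\ref{cor.relGenqHgradPBW}(ii) as the structural starting point. Under the hypotheses $f=pt$, $p,q\in K^*$ and $g$ homogeneous of degree $2$, that corollary already identifies $\mathcal{H}_q(f, g)=\sigma(K[t])\langle x, y\rangle$ as a connected graded skew PBW extension of $R=K[t]$ in two variables, which places us squarely in the setting of the three preservation results recalled in Section~\ref{sect.qHeis} and in the preliminaries.

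First I would handle the Koszul property. Since $R=K[t]$ is finitely presented and Koszul (as a polynomial algebra in one variable, with no quadratic relations to impose), the result of the second-named author in \cite[Theorem~5.5]{Suarez2017}, cited in Section~2, immediately yields that every graded skew PBW extension of a finitely presented Koszul algebra is Koszul. In particular, $\mathcal{H}_q(f, g)$ is Koszul.

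For the remaining two properties I would invoke the G\'omez--Su\'arez theorem recalled in Section~2: a graded skew PBW extension $A=\sigma(R)\langle x_1, x_2\rangle$ in two variables of a connected Artin--Schelter regular algebra $R$ is itself Artin--Schelter regular, and if $R$ is furthermore connected skew Calabi--Yau of dimension $d$ then $A$ is skew Calabi--Yau of dimension $d+2$. Since $K[t]$ is connected, Artin--Schelter regular, and Calabi--Yau of dimension $1$, applying this result to the presentation supplied by Corollary~\ref{cor.relGenqHgradPBW}(ii) gives that $\mathcal{H}_q(f, g)$ is Artin--Schelter regular and graded skew Calabi--Yau of dimension $1+2=3$. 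No real obstacle arises; the only subtle point is that the degree-$2$ homogeneity of $g$ is precisely what is needed to fit the relation $yx-qxy=g$ into condition~(ii) of Proposition~\ref{prop.grad A}, and this verification is already absorbed into the statement of Corollary~\ref{cor.relGenqHgradPBW}(ii).
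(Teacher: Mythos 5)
Your proof is correct, but it follows a genuinely different route from the paper's. The paper's proof does not pass through the skew PBW extension machinery at all: it writes $\mathcal{H}_q(pt, ct^2)$ explicitly as the connected graded iterated Ore extension $K[t][x;\sigma_x][y;\sigma_y,\delta_y]$ with $\delta_y(x)=ct^2$ and then observes that this is structurally identical to the situation of Theorem~\ref{teo.TwoHKosz} for $H\mathcal{H}_{p,q}$, so all three properties follow from the same chain of citations used there (Phan for Koszulity of graded Ore extensions, Liu--Wang--Wu for the graded skew Calabi--Yau property under graded Ore extensions, and Reyes--Rogalski--Zhang for the equivalence with Artin--Schelter regularity in the connected case). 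You instead start from Corollary~\ref{cor.relGenqHgradPBW}(ii), which exhibits $\mathcal{H}_q(f,g)$ as a connected graded skew PBW extension $\sigma(K[t])\langle x,y\rangle$ in two variables, and then invoke \cite[Theorem~5.5]{Suarez2017} for Koszulity and the G\'omez--Su\'arez preservation theorem for Artin--Schelter regularity and the skew Calabi--Yau property in dimension $1+2=3$. Both arguments are valid and both rest on preservation theorems applied to $K[t]$; the paper's version is slightly more economical in that it literally reuses Theorem~\ref{teo.TwoHKosz}, while yours has the merit of staying entirely within the skew PBW framework that the paper advertises and of not requiring the Ore-extension realization. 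One cosmetic remark: the G\'omez--Su\'arez result as recalled in Section~2 concludes ``skew Calabi--Yau,'' whereas the corollary asserts ``graded skew Calabi--Yau''; since you also establish Artin--Schelter regularity and the algebra is connected graded, the graded form follows from \cite[Lemma~1.2]{ReyesRogalskiZhang}, so this is not a gap, but it is worth a half-sentence.
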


 \begin{proof}
 $\mathcal{H}_q(f, g)=\mathcal{H}_q(pz, cz^2)$ for $c\in K$, $p,q\in K^*$  is the connected graded iterated Ore extension
\[
K[z][x; \sigma_x][y; \sigma_y, \delta_y],
\]
 where
 \[
\sigma_x(z)=pz,\quad \sigma_y(x) = qx,\quad \sigma_y(z) = p^{-1}z,
\]
 \[
 \delta_y(x) = cz^2,\quad \delta_y(z) = 0.
\]
Then the result follows from Theorem \ref{teo.TwoHKosz}.
 \end{proof}

Note that $\mathcal{H}_q(f, g)$, with $q$, $f$ and $g$ as above, is of type $S'_1$ in the classification of regular algebras of global dimension 3 generated in degree 1, given by Artin and Schelter in \cite[Theorem 10]{ArtinSchelter}.

Let $V$ be a 3-dimensional vector space, $\deg(V) = 1$ and $TV$ the tensor algebra. Fix a basis $\{t, x, y\}$ for $V$. The \emph{cyclic partial derivative}
with respect to $x$ of a word $\Phi$ in the letters $t, x, y$ is
$\partial_x(\Phi):= \sum_{\Phi=uxv}vu$ where the sum is taken over all such factorizations. We extend $\partial_x$ to $TV$ by linearity.
We define $\partial_y$ and  $\partial_t$ in a similar way. The \emph{Jacobian algebra} $J(\Phi)$ associated to $\Phi \in TV$ is the quotient algebra of $TV$ by the ideal generated by the cyclic partial derivatives, i.e., $J(\Phi):=TV/\langle \partial_x(\Phi), \partial_y(\Phi), \partial_t(\Phi)\rangle$. The linear span, $R_{\Phi}:= \operatorname{span}\{\partial_x(\Phi), \partial_y(\Phi),\partial_t(\Phi)\}$, does not depend on the choice of basis for $V$ (see \cite{MoriSmith2017}). Given $\Phi_3\in V^{\otimes 3}$ (in this case $\Phi_3$ is called a \emph{homogeneous potential} of degree 3), Mori and  Smith \cite[Theorem 1.3]{MoriSmith2017} proved that $J(\Phi)$ is graded 3-Calabi--Yau if and only if it is a 3-dimensional Artin--Schelter regular algebra.\\

From this point on we assume that the field $K$ is algebraically closed of characteristic not $2$ or $3$.
\begin{theorem}\label{teo.QgHGradCY}
 Let $\mathcal{H}_q(f, g)$ be quantum generalized Heisenberg algebra with $f=pt$,  $p\in K^*$, $q=p^{-1}$ and let $g$ be a homogeneous polynomial of degree 2. Then $\mathcal{H}_q(f, g)$ is a graded Calabi--Yau algebra  of dimension 3.
\end{theorem}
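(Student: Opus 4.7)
The strategy is to realize $\mathcal{H}_{p^{-1}}(pt,g)$ as a Jacobian algebra $J(\Phi)$ of a homogeneous cubic potential $\Phi$, and then to upgrade the Artin--Schelter regularity already established in Corollary~\ref{cor.qgenKoszul} to the graded Calabi--Yau property by invoking the Mori--Smith criterion \cite[Theorem 1.3]{MoriSmith2017} recalled immediately above the statement.

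First, since $g\in K[t]$ is homogeneous of degree $2$ in the grading with $\deg t=\deg x=\deg y=1$, we necessarily have $g=ct^2$ for some $c\in K$, so that the defining relations of $\mathcal{H}_{p^{-1}}(pt,ct^2)$ read
\[
tx-pxt=0,\qquad yt-pty=0,\qquad yx-p^{-1}xy-ct^2=0.
\]

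Second, I would look for a potential of the form $\Phi=A\,txy+B\,tyx+C\,t^3\in V^{\otimes 3}$, with $V=Kt\oplus Kx\oplus Ky$, and match its cyclic partial derivatives with scalar multiples of the three relations above. A direct computation of cyclic derivatives on the generators of the monomials $txy$, $tyx$ and $t^3$ gives
\begin{align*}
\partial_t(\Phi) &= A\,xy+B\,yx+3C\,t^2,\\
\partial_x(\Phi) &= A\,yt+B\,ty,\\
\partial_y(\Phi) &= A\,tx+B\,xt,
\end{align*}
so that the choice $A=1$, $B=-p$, $C=\tfrac{cp}{3}$ (this is where the hypothesis $\mathrm{char}\,K\neq 3$ enters) makes each $\partial_*(\Phi)$ a nonzero scalar multiple of the corresponding defining relation. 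Hence $J(\Phi)=TV/\langle\partial_t(\Phi),\partial_x(\Phi),\partial_y(\Phi)\rangle$ coincides with $\mathcal{H}_{p^{-1}}(pt,ct^2)$.

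Finally, Corollary~\ref{cor.qgenKoszul} ensures that $\mathcal{H}_{p^{-1}}(pt,ct^2)$ is Artin--Schelter regular of global dimension $3$, so the Mori--Smith theorem yields that $J(\Phi)$ is graded $3$-Calabi--Yau, proving the claim. The only nontrivial step is identifying the correct shape of $\Phi$; once one observes that the three defining relations involve only the cyclic classes $[txy]$, $[tyx]$ and $[t^3]$, the coefficients $A,B,C$ are forced by a $3\times 3$ linear system, and the rest of the argument reduces to elementary bookkeeping plus the cited criterion.
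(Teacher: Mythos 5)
Your proof is correct and follows essentially the same route as the paper: exhibit $\mathcal{H}_{p^{-1}}(pt,ct^2)$ as the Jacobian algebra of a homogeneous cubic potential and then combine the Artin--Schelter regularity from Corollary~\ref{cor.qgenKoszul} with the Mori--Smith criterion. Your three-term potential $txy-p\,tyx+\tfrac{cp}{3}t^{3}$ differs from the paper's seven-term $\Phi_3$ only by terms whose cyclic derivatives cancel, so it produces exactly the same three relations and the same conclusion.
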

\begin{proof}
Suppose that $f=pt$,  $p\in K^*$, $q=p^{-1}$ and $g=ct^2$ is a homogeneous polynomial of degree 2. Let $\Phi_3=xyt+pytx-ptxy+yxt-xty-ptyx+3^{-1}pct^3$. Then $\partial_x(\Phi_3)=yt-pty$, $\partial_y(\Phi_3)=tx-pxt$ and $\partial_t(\Phi_3)=xy-pyx+pct^2=-p(yx-p^{-1}xy-ct^2)$. Then the Jacobian algebra  associated to $\Phi_3$ is
$$J(\Phi_3)=K\langle t,x,y\rangle/\langle yt-pty, tx-pxt, yx-qxy-ct^2\rangle= \mathcal{H}_q(f, g).$$
Then the quantum generalized Heisenberg algebra $\mathcal{H}_q(f, g)$ is a Jacobian algebra associated to a homogeneous potential $\Phi_3$. By Corollary \ref{cor.qgenKoszul} $\mathcal{H}_q(f, g)$ is Artin--Schelter regular of dimension 3. Then by \cite[Theorem 1.3]{MoriSmith2017} we have  that $\mathcal{H}_q(f, g)$ is graded Calabi--Yau of dimension 3.
\end{proof}

Berger and Taillefer \cite[Theorem 3.1]{Berger2007} proved that if the \emph{Jacobian algebra} $A=J(\Phi_{N+1})$ is a graded Calabi--Yau algebra of dimension 3, where
$\Phi_{N+1}$ is a homogeneous potential of degree $N +1$, and $\Phi = \Phi_{N+1}+\Phi'= \Phi_{N+1} +\Phi_{N} +\cdots +\Phi_1+\Phi_0$ is a potential with $\deg \Phi_j = j$ for each $0 \leq j\leq N + 1$, then $A' := J(\Phi)$ is a  PBW deformation of $A$. They also proved that if $A=J(\Phi_{N+1})$ is a graded Calabi--Yau algebra of dimension 3 and
$A' := J(\Phi)$ is a PBW deformation of $A$ associated to a potential $\Phi =\Phi_{N+1}+\Phi'$ with
$\deg \Phi' \leq N$, then $A'$ is Calabi--Yau of dimension 3 \cite[Theorem 3.6]{Berger2007}.

\begin{theorem}\label{teo.QgHCY}
Let $\mathcal{H}_q(f', g')$ be quantum generalized Heisenberg algebra for $f',g'\in K[t]$, with $f'=pt+k$, $g'=ct^2+dt+e$ with $c,d,e,k\in K$, $p\in K^*$ and $q=p^{-1}$. Then $\mathcal{H}_q(f', g')$ is a Calabi--Yau algebra  of dimension 3.
\end{theorem}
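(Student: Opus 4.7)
The strategy is to realize $\mathcal{H}_q(f', g')$ as the Jacobian algebra $J(\Phi)$ of a non-homogeneous potential whose top-degree component is exactly the homogeneous potential $\Phi_3$ used in the proof of Theorem~\ref{teo.QgHGradCY}, and then to invoke the Berger--Taillefer results on PBW deformations of graded $3$-Calabi--Yau Jacobian algebras.

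Setting
\begin{equation*}
\Phi_3 = xyt + pytx - ptxy + yxt - xty - ptyx + 3^{-1}pct^3,
\end{equation*}
where $c$ is the coefficient of $t^2$ in $g'$, I would seek a complementary potential $\Phi' = \Phi_2 + \Phi_1 + \Phi_0$, with $\deg \Phi_j = j$, such that the cyclic partial derivatives of $\Phi := \Phi_3 + \Phi'$ yield precisely the defining relations
\begin{equation*}
tx - pxt - kx = 0,\quad yt - pty - ky = 0,\quad yx - qxy - ct^2 - dt - e = 0
\end{equation*}
of $\mathcal{H}_q(f', g')$. The partial derivatives of $\Phi_3$ already contribute $tx - pxt$, $yt - pty$, and $-p(yx - qxy - ct^2)$, so what is needed is $\partial_y(\Phi') = -kx$, $\partial_x(\Phi') = -ky$, and $\partial_t(\Phi') = pdt + pe$. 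A short direct computation (using $\partial_x(xy) = \partial_x(yx) = y$, $\partial_y(xy) = \partial_y(yx) = x$, $\partial_t(t^2) = 2t$, $\partial_t(t) = 1$, and the vanishing of derivatives on constants) shows that
\begin{equation*}
\Phi_2 = -k\, xy + 2^{-1}pd\, t^2,\qquad \Phi_1 = pe\, t,\qquad \Phi_0 = 0,
\end{equation*}
does the job, so that $J(\Phi) \cong \mathcal{H}_q(f', g')$.

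By Theorem~\ref{teo.QgHGradCY} the Jacobian algebra $J(\Phi_3) \cong \mathcal{H}_{p^{-1}}(pt, ct^2)$ is graded Calabi--Yau of dimension $3$. With $N+1 = 3$ and $\deg \Phi' \leq 2 = N$, \cite[Theorem 3.1]{Berger2007} then implies that $J(\Phi)$ is a PBW deformation of $J(\Phi_3)$, and \cite[Theorem 3.6]{Berger2007} concludes that $J(\Phi) \cong \mathcal{H}_q(f', g')$ is Calabi--Yau of dimension $3$, as claimed.

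The only genuine computation is the verification of cyclic partial derivatives in the second paragraph, which is routine; the fractional coefficients $2^{-1}$ and $3^{-1}$ make legitimate use of the standing assumption that $\operatorname{char}(K) \neq 2, 3$. No significant obstacle is anticipated, since both the explicit Jacobian description of $\mathcal{H}_{p^{-1}}(pt, ct^2)$ in Theorem~\ref{teo.QgHGradCY} and the PBW-deformation machinery of Berger and Taillefer are already in place.
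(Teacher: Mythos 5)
Your proposal is correct and follows essentially the same route as the paper: realize $\mathcal{H}_q(f',g')$ as the Jacobian algebra of $\Phi_3$ plus a lower-degree potential $\Phi'$ supported on $xy$, $t^2$, $t$, then apply \cite[Theorems 3.1 and 3.6]{Berger2007}. In fact your choice $\Phi'=-k\,xy+2^{-1}pd\,t^2+pe\,t$ normalizes the coefficients more carefully than the paper's stated $\Phi'$, so that $\partial_t(\Phi)$ is exactly $-p(yx-qxy-g')$; no further changes are needed.
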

\begin{proof}
By Theorem \ref{teo.QgHGradCY} and its proof we have that for $f=pt$, $g=ct^2$ and $q=p^{-1}$, the quantum generalized Heisenberg algebra $\mathcal{H}_q(f, g)$ is a graded Calabi--Yau algebra  of dimension 3, which is a Jacobian algebra associated to a homogeneous potential $\Phi_3=xyt+pytx-ptxy+yxt-xty-ptyx+3^{-1}pct^3$. Let $\Phi'=kxy+2^{-1}dt^2+et$  and let $\Phi = \Phi_{3}-\Phi'= xyt+pytx-ptxy+yxt-xty-ptyx+3^{-1}pct^3-kxy-2^{-1}dt^2-et$ be a potentials. Then
$\partial_x(\Phi)=yt-pty-ky=yt-f'y$, $\partial_y(\Phi)=tx-pxt-kx$ and $\partial_t(\Phi)=yx-qxy-ct^2-dt-e$. Then  the Jacobian algebra associated to a potential $\Phi$ is
$$J(\Phi)=K\langle t,x,y\rangle/\langle yt-f'y, tx-xf', yx-qxy-g'\rangle= \mathcal{H}_q(f', g').$$ By \cite[Theorem 3.1]{Berger2007} $\mathcal{H}_q(f', g')$ is a
PBW deformation of $\mathcal{H}_q(f, g)$. Since $\mathcal{H}_q(f, g)$ is a graded Calabi--Yau algebra of dimension 3 and $\deg \Phi'=2\leq \deg (\Phi_3)$ then by \cite[Theorem 3.6]{Berger2007} we have that $\mathcal{H}_q(f', g')$ is Calabi--Yau algebra  of dimension 3.
\end{proof}

\begin{corollary}\label{cor.qHeisCY}
The quantum Heisenberg algebra $\mathcal{H}_q$ is a Calabi--Yau algebra.
\end{corollary}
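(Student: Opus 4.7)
The plan is to recognize the quantum Heisenberg algebra as a special instance of the quantum generalized Heisenberg algebras already treated in Theorem~\ref{teo.QgHCY}, and then to check that the parameters fall within the range of that theorem's hypotheses.

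First, I would use Remark~\ref{rem.relQGHeis}(iii), which identifies $\mathcal{H}_q$ with the quantum generalized Heisenberg algebra $\mathcal{H}_q(f',g')$ where $f'(t)=q^{-1}t$ and $g'(t)=t$. A quick sanity check with the defining relations confirms this: the relation $tx=xf'=q^{-1}xt$ is equivalent to $xt=qtx$, the relation $yt=f'y=q^{-1}ty$ matches, and $yx-qxy=g'=t$ is the third relation of $\mathcal{H}_q$ as given in (\ref{eq.quanHeis}).

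Second, I would verify that this choice of data satisfies the hypotheses of Theorem~\ref{teo.QgHCY}. In the notation of that theorem, setting $p=q^{-1}$, $k=0$, $c=0$, $d=1$, $e=0$ gives $f'=pt+k=q^{-1}t$ and $g'=ct^2+dt+e=t$, with $p\in K^*$. Crucially, the quantum parameter of the ambient algebra must equal $p^{-1}$, and indeed $p^{-1}=q$, matching the parameter $q$ of $\mathcal{H}_q$. Hence all hypotheses of Theorem~\ref{teo.QgHCY} are met and the conclusion yields that $\mathcal{H}_q$ is a Calabi--Yau algebra of dimension 3.

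There is essentially no obstacle here beyond the bookkeeping of parameters: the whole content lies in matching $(p,k,c,d,e,q)$ correctly so that Theorem~\ref{teo.QgHCY} applies. The only subtle point worth stating explicitly in the proof is the coincidence $p^{-1}=q$ that is forced upon us by the definition of $\mathcal{H}_q$, which is precisely why the quantum Heisenberg algebra lands in the Calabi--Yau locus (rather than merely the skew Calabi--Yau locus covered by Corollary~\ref{cor.qgenKoszul}).
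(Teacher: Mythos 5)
Your proposal is correct and follows exactly the paper's own route: identify $\mathcal{H}_q$ as $\mathcal{H}_q(q^{-1}t,\,t)$ and apply Theorem~\ref{teo.QgHCY}, the key point being that the quantum parameter $q$ equals $p^{-1}$ for $p=q^{-1}$. Your version merely spells out the parameter bookkeeping that the paper leaves implicit.
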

\begin{proof}
Note that $\mathcal{H}_q= \mathcal{H}_q(f', g')$ for  $f'=q^{-1}t$ and $g'=t$. Then by Theorem \ref{teo.QgHCY} we have that $\mathcal{H}_q$ is Calabi--Yau.
\end{proof}

\begin{corollary}\label{cor.genHeisCY}
If $f=t+k$, with $k\in K$, then the generalized Heisenberg algebra $\mathcal{H}(f)$  is  Calabi--Yau.
\end{corollary}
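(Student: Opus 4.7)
The plan is to reduce this corollary to a direct application of Theorem~\ref{teo.QgHCY}. By Remark~\ref{rem.relQGHeis}(i), every generalized Heisenberg algebra $\mathcal{H}(f)$ coincides with the quantum generalized Heisenberg algebra $\mathcal{H}_1(f,\, f-t)$. Specializing to $f = t+k$, I would compute $f - t = k$, so that $\mathcal{H}(t+k) = \mathcal{H}_1(t+k,\, k)$, where the second argument is the constant polynomial $k$.

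Next I would match this presentation against the hypotheses of Theorem~\ref{teo.QgHCY}, which requires a quantum generalized Heisenberg algebra $\mathcal{H}_q(f', g')$ with $f' = pt + k'$ (for some $p \in K^*$ and $k' \in K$), $g' = ct^2 + dt + e$, and $q = p^{-1}$. Taking $p = 1$, the polynomial $f' = t+k$ is of the required form, and the condition $q = p^{-1} = 1$ matches the value $q=1$ arising from Remark~\ref{rem.relQGHeis}(i). Similarly, choosing $c = d = 0$ and $e = k$ expresses the constant $g' = k$ in the template $ct^2+dt+e$. Hence all the hypotheses of Theorem~\ref{teo.QgHCY} are satisfied.

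The conclusion of Theorem~\ref{teo.QgHCY} then yields that $\mathcal{H}_1(t+k,\, k)$, and therefore $\mathcal{H}(f)$, is Calabi--Yau of dimension $3$. There is no genuine obstacle here: the substantive work has already been carried out in Theorem~\ref{teo.QgHGradCY} and Theorem~\ref{teo.QgHCY} via the Jacobian algebra viewpoint and the PBW-deformation result of Berger--Taillefer. The corollary is simply the observation that the classical generalized Heisenberg algebras $\mathcal{H}(t+k)$ arise as the instance $(p,q,c,d,e) = (1,1,0,0,k)$ of that broader family.
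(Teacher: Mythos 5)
Your proposal is correct and follows exactly the paper's own argument: identify $\mathcal{H}(t+k)$ with $\mathcal{H}_1(t+k,\,k)$ via Remark~\ref{rem.relQGHeis}(i) and then apply Theorem~\ref{teo.QgHCY} with $p=q=1$ and $g'=k$. Your version merely spells out the parameter matching $(p,q,c,d,e)=(1,1,0,0,k)$ more explicitly than the paper does.
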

\begin{proof}
If $f=t+k$ with $k\in K$ then a generalized Heisenberg algebra $\mathcal{H}(f)$ is the quantum generalized Heisenberg algebra $\mathcal{H}_1(f, k)$. Thus by Theorem \ref{teo.QgHCY} we have that $\mathcal{H}(f)$ is Calabi--Yau.
\end{proof}

\section*{Acknowledgement}

The first-named author was partially supported by CMUP, member of LASI, which is financed by national funds through FCT -- Funda\c c\~ao para a Ci\^encia e a Tecnologia, I.P., under the project with reference UID/00144/2025, \url{https://doi.org/10.54499/UID/00144/2025}. He would like to thank the invitation and hospitality of the Universidad Pedag\'ogica y Tecnol\'ogica de Colombia, in Tunja, and his present co-authors for the mathematical discussions held during a visit in 2024. The second-named author was partially supported by Vicerrector\'ia
de Investigaci\'on y Extensi\'on, Universidad Pedag\'ogica y Tecnol\'ogica de Colombia, Tunja, Colombia.

\end{document}